\numberwithin{equation}{section}
\theoremstyle{plain}
\newtheorem{thm}{Theorem}[section]
\newtheorem{lem}[thm]{Lemma}
\newtheorem{cor}[thm]{Corollary}
\theoremstyle{remark}
\newtheorem{rem}[thm]{Remark}
\newcommand{\N}{\mathbb{N}}
\newcommand{\Z}{\mathbb{Z}}
\newcommand{\R}{\mathbb{R}}
\newcommand{\C}{\mathbb{C}}
\newcommand{\F}{\mathbb{F}}
\newcommand{\CE}{\mathcal{E}}
\newcommand{\CJ}{\mathcal{J}}
\newcommand{\CL}{\mathcal{L}}
\newcommand{\CP}{\mathcal{P}}
\newcommand{\CR}{\mathcal{R}}
\newcommand{\CZ}{\mathcal{Z}}
\newcommand{\abs}[1]{\left\lvert #1 \right\rvert}
\newcommand{\bs}\boldsymbol{}
\newcommand{\eq}[2]{ \begin{equation} \label{#1}\begin{split} #2 \end{split} \end{equation} }
\newcommand{\al}[1]{\begin{align} #1 \end{align} }
\newcommand{\als}[1]{\begin{align*} #1 \end{align*} }
\newcommand{\sub}[1]{ \substack{ #1 } } 
\newcommand{\ssum}[1]{\,\sideset{}{^*}\sum_{ #1 } } 
\newcommand{\nn}{\nonumber \\}
\newcommand{\dee}{\mathrm{d}}
\renewcommand{\bar}[1]{\overline{#1}}
\renewcommand{\mod}[1]{\,({\rm mod}\,#1)}
\pgfplotsset{my style/.append style={axis x line=bottom, axis y line=
left, xlabel={$\theta$}, ylabel={$\alpha$}, axis equal }}
\begin{document}

\title{Primes in short arithmetic progressions}

\author{Dimitris Koukoulopoulos}
\address{D\'epartement de math\'ematiques et de statistique\\
Universit\'e de Montr\'eal\\
CP 6128 succ. Centre-Ville\\
Montr\'eal, QC H3C 3J7\\
Canada}
\email{{\tt koukoulo@dms.umontreal.ca}}

\subjclass[2010]{11N05, 11N13}
\keywords{Primes in progressions, primes in short intervals, Bombieri-Vinogradov Theorem}

\date{\today}

\maketitle

\begin{center}
\dedicatory{\textit{ \small{In the memory of Paul and Felice Bateman, and of Heini Halberstam}}}
\end{center}

\begin{abstract} Let $x,h$ and $Q$ be three parameters. We show that, for most moduli $q\le Q$ and for most positive real numbers $y\le x$, every reduced arithmetic progression $a\mod q$ has approximately the expected number of primes $p$ from the interval $(y,y+h]$, provided that $h>x^{1/6+\epsilon}$ and $Q$ satisfies appropriate bounds in terms of $h$ and $x$. Moreover, we prove that, for most moduli $q\le Q$ and for most positive real numbers $y\le x$, there is at least one prime $p\in(y,y+h]$ lying in every reduced arithmetic progression $a\mod q$, provided that $1\le Q^2\le h/x^{1/15+\epsilon}$.
\end{abstract}

%\setcounter{tocdepth}{1}
%\tableofcontents

\section{Introduction}
Let
\[
E(x,h;q) = \max_{(a,q)=1}  \abs{ \sum_{\substack{x<p\le x+h \\ p\equiv a\mod{q}} } \log p -  \frac{h}{\phi(q)}  }  .
\]
It is believed that $E(x,h;q)\ll_\epsilon x^\epsilon \sqrt{h/q}$ for all $1\le q\le h\le x$, which would imply that each subinterval of $(x,2x]$ of length $>qx^{\epsilon'}$ has its fair share of primes from each reduced arithmetic progression $a\mod q$ (see \cite{Mon;1976,FG;1989} for results and conjectures concerning the case $h=x$). Proving such a result lies well beyond the current technology. However, for several applications it turns out that bounding $E(x,h;q)$ {\it on average} suffices. The case $h=x$, is a rephrasing of the famous Bombieri-Vinogradov theorem: for each fixed $A>0$, there is some $B=B(A)>0$ such that
\[
\sum_{q\le x^{1/2}/(\log x)^B} E(x,x;q) \ll \frac{x}{(\log x)^A} .
\]
Subsequently, various authors focused on obtaining similar results for $h$ small compared to $x$. The first such results were obtained by Jutila \cite{Jut;1969}, Motohashi \cite{Mot;1971}, and Huxley and Iwaniec \cite{HI;1975}. Their bounds were subsequently improved by Perelli, Pintz and Salerno \cite{PPS;1984,PPS;1985} and, finally, by Timofeev \cite{Tim;1987}, who showed that
\eq{bv-short}{
\sum_{q\le Q} E(x,h;q) \ll \frac{h}{(\log x)^A} 
}
when $x^{3/5}(\log x)^{2A+129}\le y\le x$ and $Q\le y/(\sqrt{x}(\log x)^{A+64})$, and when $x^{7/12+\epsilon}\le y\le x$ and $Q\le y/x^{11/20+\epsilon}$ with $\epsilon$ fixed and positive.

Results such as the above ones are closely related to what we call {\it zero-density estimates}. Typically, such an estimate is an inequality of the form 
\eq{zero-density}{
\sum_{q\le Q} \ssum{\chi\mod q}N(\sigma,T,\chi)\ll (Q^2T)^{c(1-\sigma)} \log^M(QT) 
	\qquad(Q\ge1,\ T\ge2,\ 1/2\le\sigma\le 1) ,
}
where $c$ and $M$ are some fixed numbers, $N(\sigma,T,\chi)$ denotes the number of zeroes $\rho=\beta+i\gamma$ of the Dirichlet $L$-function $L(s,\chi)$ with $\beta\ge\sigma$ and $|\gamma|\le T$, and the symbol $\ssum{}$ means that we are summing over primitive characters only. The best result of this form we currently know is with $c=12/5+\epsilon$ (and $M=14$ is admissible), a consequence of \cite[Theorem 12.2, eqn. (12.13)]{Mon;1971} when $1/2\le\sigma\le3/4$, of \cite[eqn. (1.1)]{Hux;1974} for $3/4\le\sigma\le 5/6$, and of \cite[Theorem 12.2, eqn. (12.14)]{Mon;1971} when $5/6\le\sigma\le1$. The case $c=2$ and $M=1$ is called the {\it Grand Density Conjecture} \cite[p. 250]{IK;2004}, which, for practical purposes, is often as strong as the Generalized Riemann Hypothesis itself. Proving that \eqref{zero-density} holds for some $c<12/5$ would immediately imply relation \eqref{bv-short} in a wider range of $h$.

\medskip

In the present paper, we study the distribution of primes in short arithmetic progressions too, with the difference that we let the exact location of the interval $(x,x+h]$ vary. The first result of this flavour was shown by Selberg \cite{Sel;1943} when $Q=1$, whose work implies that
\eq{Q=1}{
\int_x^{2x} \abs{\sum_{y<p\le y+h} \log p - h} \dee y \ll \frac{hx}{(\log x)^A}
}
for all $A>0$, as long as $h>x^{19/77+\epsilon}$. Huxley's results \cite{Hux;1972} allows one to demonstrate \eqref{Q=1} when $h>x^{1/6+\epsilon}$. Finally, if relation \eqref{zero-density} is true for $Q=1$ and some $c\ge2$, then \eqref{Q=1} holds for $h>x^{1-2/c+\epsilon}$ (see, for example, \cite[Exercise 5]{IK;2004}). Our first result is a generalization of this statement.

%%%%%%%%%%%%%%%%%%%%%%%%%%%%%%%%%%%%%%%%%%%%%%%%%%%%%%%%%%%%%%%%%%%%%%%%%%%%%%%%%%%%%%%%%%%%%%%%%%%%%%%%%%%%%%%%%%%%%%%%%%%%%%%%%%%%%%%%%%%%%%%%%%%%%%%%%%%%%%%%%%%%%%%%%%%%%%%%%%%%%%%%%%%%%%%%%%%%%%%%%%%%%%%%%%%%%%%%%%%%%%%%%%%%%%%%%%%%%%%%%%%%

\begin{thm}\label{mainthm1}
Assume that relation \eqref{zero-density} holds for some $c\in[2,4]$. Fix $A\ge1$ and $\epsilon\in(0,1/3]$. If $x\ge h\ge1$ and $1\le Q^2\le h/x^{1-2/c+\epsilon}$, then
\[
\int_x^{2x}\sum_{q\le Q}  E(y,h;q) \dee y   \ll_{\epsilon,A} \frac{hx}{(\log x)^A}  .
\]
\end{thm}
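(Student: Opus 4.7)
The plan is to reduce $E(y,h;q)$ to a weighted mean of short character sums over primitive Dirichlet characters, and then apply the zero-density hypothesis \eqref{zero-density} together with the explicit formula. The averaging over $y\in[x,2x]$ gives the crucial extra savings.

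First, character orthogonality gives
\[
E(y,h;q) \le \frac{1}{\phi(q)}\sum_{\chi\ne\chi_0\mod q}|F(y,\chi)| + O((\log x)^2),\qquad F(y,\chi):=\psi(y+h,\chi)-\psi(y,\chi).
\]
Reducing each non-principal $\chi$ to the primitive $\chi^*$ that induces it, and swapping orders of summation,
\[
\sum_{q\le Q}E(y,h;q) \ll (\log Q)\sum_{q^*\le Q}\frac{1}{\phi(q^*)}\ssum{\chi^*\mod q^*}|F(y,\chi^*)| + O(Q(\log x)^2).
\]

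The next step is to take the $L^2$ mean over $y$ via the truncated explicit formula. For each primitive $\chi^*$ of conductor $q^*\ge3$ and for $T\ge2$,
\[
F(y,\chi^*) = -\sum_{|\gamma|\le T}\frac{(y+h)^\rho - y^\rho}{\rho} + O\!\left(\frac{x(\log xq^*T)^2}{T}\right).
\]
Using $((y+h)^\rho-y^\rho)/\rho = \int_0^h(y+u)^{\rho-1}\,du$, Cauchy--Schwarz in $u$, and a change of variables,
\[
\int_x^{2x}|F(y,\chi^*)|^2\,dy \ll (\log q^*T)^{O(1)}\,h^2\!\sum_{|\gamma|\le T}x^{2\beta-1} + \frac{x^3(\log xq^*T)^{O(1)}}{T^2},
\]
where the off-diagonal contribution to $\int_x^{3x}|\sum_\rho t^{\rho-1}|^2\,dt$ is absorbed into the $(\log)^{O(1)}$ factor via the standard zero-counting estimate for nearby zeros.

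Summing over $(q^*,\chi^*)$ with weight $1/\phi(q^*)$ and applying Cauchy--Schwarz in both $y$ and $(q^*,\chi^*)$ (using $\sum_{q^*,\chi^*}1/\phi(q^*)\ll Q$), the desired bound reduces to showing
\[
\sum_{q^*\le Q}\frac{1}{\phi(q^*)}\ssum{\chi^*\mod q^*}\sum_{|\gamma|\le T}x^{2\beta-1} \ll \frac{x}{Q(\log x)^{2A+O(1)}}.
\]
This I estimate by partial summation in $\beta$ combined with \eqref{zero-density}, after a dyadic decomposition of the $q^*$-sum that recovers a factor of $Q^{-1}$ in front of $(Q^2T)^{c(1-\sigma)}$. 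The integral $\int_{1/2}^1 x^{2\sigma-1}(Q^2T)^{c(1-\sigma)}\,d\sigma$ then inherits a logarithmic saving provided $(Q^2T)^c\le x^2(\log x)^{-O(1)}$. Simultaneously, the truncation parameter $T$ must be chosen large enough so that $Qx^2\log^{O(1)}/T\ll hx(\log x)^{-A}$, i.e., $T\gtrsim Qx(\log x)^{A+O(1)}/h$. The hypothesis $Q^2\le h/x^{1-2/c+\epsilon}$ is exactly what makes these two constraints on $T$ simultaneously satisfiable.

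The hardest step will be this balancing of $T$, together with the extraction of the $Q^{-1}$ savings in the weighted density from the unweighted hypothesis \eqref{zero-density}. The latter requires careful Abel summation and dyadic decomposition over $q^*$, while the former is what pins down the precise range $Q^2\le h/x^{1-2/c+\epsilon}$ as the boundary of applicability of the method.
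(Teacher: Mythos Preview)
Your overall strategy---explicit formula, Cauchy--Schwarz in $y$, then the zero-density estimate---is the same as the paper's, but the proposal as written has two genuine gaps that prevent it from reaching the stated range $Q^2\le h/x^{1-2/c+\epsilon}$.

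\textbf{Gap 1: the claimed logarithmic saving is false.} You assert that
\[
\int_{1/2}^{1} x^{2\sigma-1}(Q^2T)^{c(1-\sigma)}\,\dee\sigma
\]
saves an arbitrary power of $\log x$ once $(Q^2T)^c\le x^2(\log x)^{-O(1)}$. It does not. Writing $r=(Q^2T)^c/x^2$, the integral equals $x\int_0^{1/2}r^u\,\dee u\asymp x/\log(1/r)$, which saves only a factor of $\log\log x$ when $r=(\log x)^{-O(1)}$, and only a single $\log x$ even when $r\le x^{-\epsilon'}$. After multiplying by the $\log^M$ in \eqref{zero-density} you are left with a quantity $\gg x$, far from the required $x/(Q\log^{2A})$. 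The obstruction comes from zeros with $\beta$ very close to $1$: the density hypothesis permits up to $\log^M$ such zeros over all $(q^*,\chi^*)$, each contributing $x^{2\beta-1}\approx x$. The paper handles this by first \emph{removing} the moduli that carry a zero with $\beta\ge \sigma_0:=1-c_0(\log\log x)/\log x$; Siegel's theorem and the Korobov--Vinogradov zero-free region force any such modulus to exceed a large power of $\log x$, so their total contribution is disposed of trivially. With those moduli excised, the $\sigma$-integral only runs up to $\sigma_0$, and the integrand at $\sigma_0$ is $\ll x\cdot x^{-c\epsilon(1-\sigma_0)/2}=x(\log x)^{-c\epsilon c_0/2}$, which gives the arbitrary saving by choosing $c_0$ large. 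Your proposal omits this step entirely.

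\textbf{Gap 2: your truncation loses a full power of $Q$.} You pick a single truncation height $T$ and balance: the explicit-formula remainder forces $T\gtrsim Qx(\log x)^{A}/h$, while the density argument needs $Q^2T\lesssim x^{2/c}$. Combining these gives $Q^3\lesssim h/x^{1-2/c}$, not $Q^2$; for $Q>x^{\epsilon}$ (which is allowed---indeed $Q$ can be as large as $x^{1/c-\epsilon/2}$) the two constraints are \emph{not} simultaneously satisfiable. The paper avoids this by taking $T_0\asymp x$ so that the pointwise remainder is only $O((\log x)^2)$, then decomposing the zero-sum dyadically in $|\gamma|\sim T$ and exploiting the bound
\[
\left|\frac{(y+h)^\rho-y^\rho}{\rho}\right|\ll y^{\beta}\min\Bigl\{\tfrac{h}{y},\tfrac{1}{1+|\gamma|}\Bigr\}.
\]
After Cauchy--Schwarz \emph{within each dyadic block}, the target becomes $R'(D,T)\ll \max\{1,\eta T\}^2/(\log x)^{O(1)}$ with $\eta=h/x$ (up to logs), so that the effective ``length of average'' entering the density estimate is $D^2\min\{T,1/\eta\}\le D^2/\eta$, not $D^2T$. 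This is exactly what converts the condition to $Q^2\le h/x^{1-2/c+\epsilon}$. Your use of only the bound $h\,y^{\beta-1}$ (via the $\int_0^h(y+u)^{\rho-1}\dee u$ representation) discards the $1/|\gamma|$ alternative and hence the mechanism that recovers this factor of $Q$.
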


%%%%%%%%%%%%%%%%%%%%%%%%%%%%%%%%%%%%%%%%%%%%%%%%%%%%%%%%%%%%%%%%%%%%%%%%%%%%%%%%%%%%%%%%%%%%%%%%%%%%%%%%%%%%%%%%%%%%%%%%%%%%%%%%%%%%%%%%%%%%%%%%%%%%%%%%%%%%%%%%%%%%%%%%%%%%%%%%%%%%%%%%%%%%%%%%%%%%%%%%%%%%%%%%%%%%%%%%%%%%%%%%%%%%%%%%%%%%%%%%%%%%

Theorem \ref{mainthm1} will be proven in Section \ref{mainthm1-proof}. The fact that $h$ is allowed to cover a longer range compared to the case when the interval $(x,x+h]$ is fixed was important in \cite{CDKS2} and \cite{CDKS1}, where primes in progressions were needed to be found in intervals $(x,x+h]$ of length $h\asymp \sqrt{x}$.

Letting $c=12/5+\epsilon$ in Theorem \ref{mainthm1} allows us to take $Q^2=h/x^{1/6+\epsilon'}$. Using a more sophisticated approach based on the frequency of large values of Dirichlet polynomials due to Gallagher-Montgomery and Huxley, it is possible to improve this result when $h>\sqrt{x}$. This is the context of the next theorem, which will be proven in Section \ref{mainthm2-proof}. Note that when $h$ approaches $x$, our result converges towards a weak version of the Bombieri-Vinogradov theorem.

%%%%%%%%%%%%%%%%%%%%%%%%%%%%%%%%%%%%%%%%%%%%%%%%%%%%%%%%%%%%%%%%%%%%%%%%%%%%%%%%%%%%%%%%%%%%%%%%%%%%%%%%%%%%%%%%%%%%%%%%%%%%%%%%%%%%%%%%%%%%%%%%%%%%%%%%%%%%%%%%%%%%%%%%%%%%%%%%%%%%%%%%%%%%%%%%%%%%%%%%%%%%%%%%%%%%%%%%%%%%%%%%%%%%%%%%%%%%%%%%%%%%

\begin{thm}\label{mainthm2} Fix $A\ge1$ and $\epsilon\in(0,1/3]$. Let $x\ge1$, $h=x^\theta$ with $1/6+2\epsilon\le \theta\le 1$, and $Q\ge 1$ such that $Q^2\le h/x^{\alpha+\epsilon}$, where
\[
\alpha = \begin{cases}
		(1-\theta)/3			&\text{if}\ 5/8\le \theta\le 1,  \\
		1/8  					&\text{if}\ 13/24 \le \theta\le 5/8, \\
		2/3-\theta   			&\text{if}\ 1/2\le \theta\le 13/24 ,\\
		1/6					&\text{if}\ 1/6+2\epsilon\le \theta\le 1/2.
\end{cases}
\]
Then we have that
\[
\int_x^{2x}\sum_{q\le Q}  E(y,h;q) \dee y   \ll_{\epsilon,A} \frac{hx}{(\log x)^A}  .
\]
\end{thm}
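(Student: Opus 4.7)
\emph{Proof plan.} The plan is to follow the same explicit-formula skeleton used for Theorem~\ref{mainthm1}, but to replace the black-box zero-density input \eqref{zero-density} by more flexible large-value estimates for Dirichlet polynomials (Huxley, Jutila, Hal\'asz--Montgomery), which outperform \eqref{zero-density} in the regions of the critical strip that dominate when $h>\sqrt{x}$. For each $q\le Q$ and reduced residue $a\mod q$, the explicit formula gives, for a truncation parameter $T\asymp x(\log x)^{B}/h$,
\[
\sum_{\substack{y<n\le y+h \\ n\equiv a\mod{q}}}\Lambda(n)-\frac{h}{\phi(q)} = -\frac{1}{\phi(q)}\sum_{\chi\mod{q}}\bar\chi(a)\sum_{\substack{\rho=\beta+i\gamma \\ |\gamma|\le T}}\frac{(y+h)^\rho-y^\rho}{\rho} + O\!\Bigl(\frac{h}{(\log x)^{A'}}\Bigr).
\]
Reducing to primitive characters $\chi^*\mod{d}$ with $d\mid q$ and using the bound $\sum_{q\le Q,\,d\mid q}1/\phi(q)\ll (\log Q)/\phi(d)$, the theorem reduces to showing
\[
\mathcal{J} := \int_x^{2x}\sum_{d\le Q}\,\ssum{\chi\mod{d}}\biggl|\sum_{\substack{\rho \\ |\gamma|\le T}}\frac{(y+h)^\rho-y^\rho}{\rho}\biggr|\dee y \;\ll\; \frac{hx}{(\log x)^A}.
\]

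Applying Cauchy--Schwarz (first in $y$, then in the character and zero variables) together with the pointwise bound $|(y+h)^\rho-y^\rho|^2\ll\min(h^2|\rho|^2x^{2\beta-2},\,x^{2\beta})$, the estimation of $\mathcal{J}$ reduces, after a dyadic decomposition in $|\gamma|\asymp U$ and $\beta\ge\sigma$, to bounds of the shape
\[
\sum_{d\le Q}\,\ssum{\chi\mod{d}} N(\sigma,U,\chi) \;\ll\; (Q^2 U)^{c(\sigma,U)(1-\sigma)}(\log x)^{O(1)},
\]
in which the effective exponent $c(\sigma,U)$ is smaller than $12/5$ on the dominant range of $(\sigma,U)$. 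Such bounds come from decomposing $\Lambda$ via a Vaughan or Heath-Brown identity and applying large-value theorems to the resulting type-I/type-II Dirichlet polynomials: close to $\sigma=1/2$ the fourth moment of $L(1/2+it,\chi)$ and the Hal\'asz--Montgomery inequality are sharpest, in a middle strip Huxley's hybrid theorem \cite{Hux;1974} wins, and close to $\sigma=1$ one invokes Ingham/Huxley-type bounds on the density of zeros near the edge of the critical strip.

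Balancing these estimates against the parameters $h=x^\theta$ and $Q^2\le h/x^{\alpha+\epsilon}$ produces the piecewise formula for $\alpha(\theta)$. The main obstacle will be the combinatorial optimization: one must show that the $\sigma$-integral of the dyadic contributions is controlled in each regime by choosing the correct large-values bound and the correct length of the polynomial in the Heath-Brown decomposition, and that the three transition points $\theta=1/2,\,13/24,\,5/8$ appear precisely where the winner among the available estimates switches. A convenient sanity check is the endpoint $\theta=1$: there $\alpha=0$ and the result degenerates into a slight weakening of Bombieri--Vinogradov, while at $\theta=1/2$ we recover $\alpha=1/6$, matching Theorem~\ref{mainthm1} at the transition into the new regime.
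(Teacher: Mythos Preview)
Your outline shares the right ingredients (Heath-Brown identity, Huxley--Hal\'asz large-value estimates) with the paper, but the \emph{order of operations} is different, and that difference matters for obtaining the precise piecewise $\alpha(\theta)$.

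The paper does \emph{not} go through the explicit formula for Theorem~\ref{mainthm2}. After the same preliminary reductions as in Section~\ref{mainthm1-proof}, it branches off: instead of expressing $S(y,\eta y;\Lambda\chi)$ as a sum over zeros, it applies Heath-Brown's identity directly to $\Lambda$, writes each resulting convolution as a product $F(s,\chi)=\prod_j F_j(s,\chi)$ of short Dirichlet polynomials, and detects the interval condition by Perron's formula. The problem then becomes bounding
\[
\sum_{D<d\le 2D}\,\ssum{\chi\mod d}\int_{\mathcal{P}(\chi,T,\boldsymbol U)}|F(1/2+it,\chi)|^2\,\dee t
\]
over well-spaced sets where each $|F_j|\asymp U_j$. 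Two ingredients drive the thresholds: (i) Lemma~\ref{charsums}, an approximate-functional-equation trick that shortens any smooth factor $F_j$ with $N_j>(DT)^{1/2+\delta}$, so one may assume all $N_j\le x^{1/2-\delta}$ and hence the number $J$ of ``large'' factors satisfies $J\ge3$ (or $J\ge4$ once $\theta>5/8$, with $k_0=4$); (ii) a four-case combinatorial analysis on the sizes $U_j$ in which the Huxley bound of Lemma~\ref{large values} is applied to $\prod_{\ell\neq j}F_\ell$ for each $j\in\mathcal J$ and the resulting inequalities are \emph{multiplied together}. That multiplication step is what produces the constraint $2J\beta\le 1$, i.e.\ $\alpha\ge 1/(2J)$, and the transition at $\theta=5/8$ comes from forcing $J\ge4$ rather than $J\ge3$.

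Your route---explicit formula, then sharpened zero-density---repackages the large-value input as an improved exponent $c(\sigma,U)$ in \eqref{zero-density}. That is certainly a valid philosophy, but the paper's thresholds do not arise from any single zero-density exponent; they come from the factor-by-factor structure of the Heath-Brown product and the multiplicative trick in Case~3. In particular, the crucial gain when $\theta>5/8$ (Case~4) uses the extra inequality $U\le\sqrt{x}\,\mathcal L^B/D$ together with $J\ge4$ to derive a contradiction, a mechanism with no obvious analogue in the zero-counting language. Your proposal asserts that ``balancing these estimates\dots produces the piecewise formula'' but gives no indication of which polynomial lengths and which large-value inequality are chosen in each regime, nor how the transitions at $13/24$ and $5/8$ emerge. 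As written, this is a plan for \emph{a} theorem of this type, not for \emph{this} theorem with these exact breakpoints; to close the gap you would need to either reproduce the $J$-factor argument inside a zero-detection framework or switch to the paper's direct Dirichlet-polynomial route.
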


%%%%%%%%%%%%%%%%%%%%%%%%%%%%%%%%%%%%%%%%%%%%%%%%%%%%%%%%%%%%%%%%%%%%%%%%%%%%%%%%%%%%%%%%%%%%%%%%%%%%%%%%%%%%%%%%%%%%%%%%%%%%%%%%%%%%%%%%%%%%%%%%%%%%%%%%%%%%%%%%%%%%%%%%%%%%%%%%%%%%%%%%%%

\noindent The graph of $\alpha$ as a function of $\theta$ is given below.

\begin{center}
\begin{tikzpicture}
  \begin{axis}[xlabel=$\theta$, ylabel=$\alpha$, xmax=1, ymin=0, ymax=0.5, 
  	xtick={0,0.25, 0.5, 0.75, 1}, ytick={0, 0.1, 0.2, 0.5}, xscale=1,yscale=0.5]
  
    \addplot[domain=1/6:1/2]{1/6};
    \addplot[domain=1/2:13/24]{2/3-x};
    \addplot[domain=13/24:5/8]{1/8};
    \addplot[domain=5/8:1]{(1-x)/3};
     \addplot[domain=1/2:5/8,dashed]{(1-x)/3};
  \end{axis}
\end{tikzpicture}
\end{center}

%%%%%%%%%%%%%%%%%%%%%%%%%%%%%%%%%%%%%%%%%%%%%%%%%%%%%%%%%%%%%%%%%%%%%%%%%%%%%%%%%%%%%%%%%%%%%%%%%%%%%%%%%%%%%%%%%%%%%%%%%%%%%%%%%%%%%%%%%%%%%%%%%%%%%%%%%%%%%%%%%%%%%%%%%%%%%%%%%%%%%%%%%%

Finally, we note that using some results due to Li \cite{Li;1997} allows us to take $\alpha=1/15$ in Theorem \ref{mainthm2} if we contend ourselves with only lower bounds on the number of primes in a short arithmetic progressions. A related result was proven by Kumchev \cite{Kum;2002} but for a fixed short interval. It should be noted that when $Q=1$, Jia \cite{Jia;1996} showed that $\alpha=1/20$ is admissible. However, the proof of Jia's =result uses some more specialized results concerning Kloosterman sums that do not have exact analogs when we add a long average over arithmetic progressions as well (though in \cite{HWW;2004} a related result was proven).

%%%%%%%%%%%%%%%%%%%%%%%%%%%%%%%%%%%%%%%%%%%%%%%%%%%%%%%%%%%%%%%%%%%%%%%%%%%%%%%%%%%%%%%%%%%%%%%%%%%%%%%%%%%%%%%%%%%%%%%%%%%%%%%%%%%%%%%%%%%%%%%%%%%%%%%%%%%%%%%%%%%%%%%%%%%%%%%%%%%%%%%

\begin{thm}\label{mainthm3} Fix $\epsilon$ and consider $x\ge h\ge 2$ and $1\le Q^2\le h/x^{1/15+\epsilon}$. 
Then there is a constant $c=c(\epsilon)>0$ such that, for all $A>0$, we have
\[
\#\left\{  \begin{array}{c} (q,n)\in \N^2 \\ q\le Q,\ n\le x \end{array}   :
	 \sum_{\substack{n<p\le n+h \\ p\equiv a\mod q}}\log p\ge \frac{ch}{\phi(q)}\quad\text{when}\ (a,q)=1\right\} 
	= Qx + O\left(\frac{Qx}{(\log x)^A}\right) .
\]
\end{thm}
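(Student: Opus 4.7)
The strategy for Theorem~\ref{mainthm3} is to replace the two-sided control of $E(y,h;q)$ used for Theorems~\ref{mainthm1}--\ref{mainthm2} by a sieve minorant $\Lambda^-\le\Lambda$ whose average over intervals of length $h$ is a positive multiple of $h$, and to control the resulting bilinear pieces using the short-interval Dirichlet-polynomial estimates of Li~\cite{Li;1997} in place of the generic zero-density estimate~\eqref{zero-density}. By Markov's inequality, the conclusion of Theorem~\ref{mainthm3} will follow once one shows that for some absolute constant $c_0>0$,
\[
\int_x^{2x}\sum_{q\le Q}\max_{(a,q)=1}\left(\frac{c_0 h}{\phi(q)}-\sum_{\substack{n<p\le n+h\\ p\equiv a\mod q}}\log p\right)_+ dn \;\ll_A\; \frac{hx}{(\log x)^A},
\]
since any pair $(q,n)$ that fails the conclusion of Theorem~\ref{mainthm3} with $c=c_0/2$ contributes at least $c_0 h/(2\phi(q)) \gg h/Q$ to the integrand, and hence the number of such pairs is $\ll Qx/(\log x)^A$.

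The plan is to build $\Lambda^-$ through Harman's sieve, iterating Buchstab's identity so that $\Lambda^-$ decomposes into a main term---a truncated M\"obius convolution of Selberg type whose average on intervals of length $h$ is $\ge c_0 h + O(h/\log x)$ uniformly over reduced classes modulo $q$---plus a finite linear combination of bilinear pieces
\[
B(\alpha,\beta;n,q,a) = \sum_{\substack{mr\in(n,n+h]\\ mr\equiv a\mod q}}\alpha_m\beta_r,
\]
with $\alpha,\beta$ supported on prescribed dyadic ranges $[M,2M]$, $[R,2R]$ satisfying $MR\asymp x$. Opening the congruence $mr\equiv a\mod q$ via Dirichlet characters and smoothing the indicator of $(n,n+h]$ reduces the required estimate
\[
\int_x^{2x}\sum_{q\le Q}\max_{(a,q)=1}\left|B(\alpha,\beta;n,q,a)-\frac{1}{\phi(q)}\sum_{\substack{mr\in(n,n+h]\\ (mr,q)=1}}\alpha_m\beta_r\right|dn \;\ll_A\; \frac{hx}{(\log x)^A}
\]
to a Gallagher--Montgomery--Huxley large-values problem for Dirichlet polynomials of length $\asymp x$, of the same shape as the one treated in Section~\ref{mainthm2-proof} but with Li's bilinear bounds in place of the generic input. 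The outcome is that the above display holds throughout a region of $(M,R)$ that covers every bilinear piece produced by the sieve, provided $Q^2\le h/x^{1/15+\epsilon}$.

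The crux of the argument is the combinatorial balancing of the Harman decomposition: one must choose the sieve parameters so that every surviving bilinear piece lies strictly inside Li's admissible region, while at the same time the total mass of the Buchstab pieces that are discarded is smaller than that of the main term, so that $\Lambda^-$ stays nonnegative in mean with a positive gap. This is the same obstruction that Jia~\cite{Jia;1996} faces when $Q=1$; there, specialized Kloosterman-sum estimates allow one to push the admissible exponent down to $1/20$, but in our $q$-averaged setting those inputs have no direct analog, which is precisely why the exponent reached here stops at $1/15$.
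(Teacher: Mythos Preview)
Your proposal is correct and follows essentially the same route as the paper: construct a sieve minorant via Buchstab iterations as in Li~\cite{Li;1997}, deduce the counting statement by a Markov-type argument, pass to primitive characters, and feed the resulting Dirichlet-polynomial mean values into Li's large-values machinery under the constraint $D^2T\le x^{14/15-\epsilon}$. The only points glossed over in your sketch that the paper treats explicitly are the removal of an exceptional set $\mathcal{E}$ of moduli (via the zero-density estimate with $c=12/5+\epsilon$, so that the surviving characters satisfy the prime-sum bound needed in Li's argument) and a short large-sieve computation to control the terms with $(n,q)>1$ when reducing to primitive characters.
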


%%%%%%%%%%%%%%%%%%%%%%%%%%%%%%%%%%%%%%%%%%%%%%%%%%%%%%%%%%%%%%%%%%%%%%%%%%%%%%%%%%%%%%%%%%%%%%%%%%%%%%%%%%%%%%%%%%%%%%%%%%%%%%%%%%%%%%%%%%%%%%%%%%%%%%%%%%%%%%%%%%%%%%%%%%%%%%%%%%%%%%%

The proof of this result, which will be given in Section \ref{mainthm3-proof}, will be relatively short as we will almost immediately appeal to Li's results and methods from \cite{Li;1997}. We state it and prove it also because of an interesting application it has to a rather distant problem studied in \cite{BPS;2012} and in \cite{CDKS1}. There the quantity of interest was $S(M,K)$, which is defined to be the number of pairs $(m,k)\in \N^2$ with $m\le M$ and $k\le K$ for which there exists an elliptic curve $E$ over $\F_p$ with group of points $E(\F_p)\cong \Z/m\Z\times\Z/mk\Z$. As it was shown by Banks, Shparlinski and Pappalardi in \cite{BPS;2012}, we have that
\[
S(M,K) = \#\{m\le M,\ k\le K:\exists\ p\in(m^2k-2m\sqrt{k}+1,m^2+2m\sqrt{k}+1),\ p\equiv 1\mod{m}\}.
\]
So a straightforward application of Theorem \ref{mainthm3} implies the following result, which is a strengthening of the unconditional part of Theorem 1.5 in \cite{CDKS1}. 

%%%%%%%%%%%%%%%%%%%%%%%%%%%%%%%%%%%%%%%%%%%%%%%%%%%%%%%%%%%%%%%%%%%%%%%%%%%%%%%%%%%%%%%%%%%%%%%%%%%%%%%%%%%%%%%%%%%%%%%%%%%%%%%%%%%%%%%%%%%%%%%%%%%%%%%%%%%%%%%%%%%%%%%%%%%%%%%%%%%%%%%

\begin{cor} Fix $\epsilon>0$ and $A>0$. If $M\le K^{13/34-\epsilon}$, then we have that
\[
S(M,K) = MK+O_{A,\epsilon}\left(\frac{MK}{(\log K)^A} \right) .
\]
\end{cor}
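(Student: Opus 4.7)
Plan. By the Banks--Pappalardi--Shparlinski formula stated just above, $S(M,K)$ counts pairs $(m,k)\in[1,M]\times[1,K]$ for which the interval $I_{m,k}:=(m^2k-2m\sqrt{k}+1,\,m^2k+2m\sqrt{k}+1)$ contains a prime $\equiv 1\mod m$. It therefore suffices to show that the number $B$ of \emph{bad} pairs (those where no such prime exists) satisfies $B=O_{\epsilon,A}(MK/(\log K)^A)$. I would proceed by a dyadic decomposition in both variables ($m\asymp M_0$, $k\asymp K_0$), invoke Theorem \ref{mainthm3} in each box, and close the argument with a dilution/double-counting step.

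Dyadic ranges with $K_0\le K^{1-\epsilon/10}$ contribute at most $\ll MK^{1-\epsilon/10}$ pairs in total, which is absorbed by the error term. For the remaining boxes I apply Theorem \ref{mainthm3} with $Q=2M_0$, $x=8M_0^2K_0$, $h=M_0\sqrt{K_0}/10$, and $\epsilon_0$ a sufficiently small multiple of $\epsilon$. The hypothesis $Q^2\le h/x^{1/15+\epsilon_0}$ unwinds to $M_0\ll K_0^{13/34-O(\epsilon_0)}$, which is guaranteed by $M\le K^{13/34-\epsilon}$ together with $K_0\ge K^{1-\epsilon/10}$. The theorem then supplies a constant $c>0$ and bounds the number of \emph{deviant} pairs $(m,n)\in[M_0,2M_0]\times[1,x]$---those with $\sum_{n<p\le n+h,\,p\equiv 1\mod m}\log p<ch/\phi(m)$---by $O_A(M_0^3K_0/(\log K)^A)$.

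The crux of the proof is the dilution argument, needed because Theorem \ref{mainthm3}'s bound exceeds the box size $M_0K_0$ by a factor $M_0^2$, so the naive injection $(m,k)\mapsto(m,n_{m,k})$ is useless. Observe that every bad $(m,k)$ renders $(m,n)$ deviant for \emph{every} integer $n$ with $(n,n+h]\subseteq I_{m,k}$ (the relevant progression sum is identically $0$), and there are $\ge 4m\sqrt{k}-h-1\gg M_0\sqrt{K_0}$ such $n$. Conversely, for each fixed deviant $(m,n)$, only $\ll \sqrt{K_0}/M_0$ values of $k\in[K_0,2K_0]$ satisfy $(n,n+h]\subseteq I_{m,k}$, since solving the defining inequalities constrains $k$ to an interval of length $\ll(4\sqrt{n}-h)/m^2\ll\sqrt{K_0}/M_0$. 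Double-counting the incidences yields
\[
M_0\sqrt{K_0}\cdot B_{M_0,K_0} \;\ll\; \frac{\sqrt{K_0}}{M_0}\cdot\frac{M_0^3K_0}{(\log K)^A},
\]
whence $B_{M_0,K_0}\ll M_0K_0/(\log K)^A$. Summing over the $O((\log K)^2)$ dyadic boxes and then enlarging $A$ by a constant absorbs the polylog factor and completes the proof. The main obstacle to bypass is precisely the $M_0^2$ gap just mentioned; the dilution recovers it exactly by trading one bad $(m,k)$ against a whole sub-interval of deviant $(m,n)$'s of length $\gg M_0\sqrt{K_0}$, against multiplicity $\ll\sqrt{K_0}/M_0$ on the incidence side.
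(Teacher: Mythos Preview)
Your argument is correct. The paper gives no proof beyond the phrase ``a straightforward application of Theorem~\ref{mainthm3},'' so there is nothing detailed to compare against; what you have written is precisely the natural way to unpack that remark.

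Two comments. First, the dilution step you highlight is genuinely needed and is not an artifact of your setup: after the change of variables $n=m^2k$ one has $I_{m,k}=((\sqrt n-1)^2,(\sqrt n+1)^2)$, so the bad $(m,k)$ correspond to deviant $(m,n)$ subject to the sparse constraint $m^2\mid n$, and Theorem~\ref{mainthm3} gives no information about how deviant $n$ are distributed in residue classes mod $m^2$. Your double-counting via the incidence set $\{(m,k,n):(n,n+h]\subseteq I_{m,k}\}$ is the clean way to recover the missing factor $M_0^2$.

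Second, a minor bookkeeping point worth noting (it does not affect correctness): the multiplicity bound ``$\ll\sqrt{K_0}/M_0$'' is only useful because $M_0\le K^{13/34-\epsilon}<K_0^{1/2}$ forces $\sqrt{K_0}/M_0\ge K_0^{2/17}\gg 1$; you might state this explicitly. Similarly, to guarantee that all the relevant $n$ lie below $x$ you want $x$ a hair larger than $8M_0^2K_0$ (say $x=10M_0^2K_0$), since the right endpoint of $I_{m,k}$ can reach $8M_0^2K_0+O(M_0\sqrt{K_0})$.
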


%%%%%%%%%%%%%%%%%%%%%%%%%%%%%%%%%%%%%%%%%%%%%%%%%%%%%%%%%%%%%%%%%%%%%%%%%%%%%%%%%%%%%%%%%%%%%%%%%%%%%%%%%%%%%%%%%%%%%%%%%%%%%%%%%%%%%%%%%%%%%%%%%%%%%%%%%%%%%%%%%%%%%%%%%%%%%%%%%%%%%%%

\subsection*{Acknowledgements} I would like to thank Sandro Bettin for many useful discussions around Lemma \ref{charsums} and the use of a smooth partition of unity, and Kaisa Matom\"aki for various useful suggestions and for providing many useful references. I would also like to thank my coauthors in \cite{CDKS2,CDKS1}, Vorrapan Chandee, Chantal David and Ethan Smith, for their helpful comments as well as for their encouragement.

This work was partially supported by the Natural Sciences and Engineering Research Council of Canada Discovery Grant 435272-2013.

%%%%%%%%%%%%%%%%%%%%%%%%%%%%%%%%%%%%%%%%%%%%%%%%%%%%%%%%%%%%%%%%%%%%%%%%%%%%%%%%%%%%%%%%%%%%%%%%%%%%%%%%%%%%%%%%%%%%%%%%%%%%%%%%%%%%%%%%%%%%%%%%%%%%%%%%%%%%%%%%%%%%%%%%%%%%%%%%%%%%%%%%%%%%%%%%%%%%%%%%%%%%%%%%%%%%%%%%%%%%%%%%%%%%%%%%%%%%%%%%%%%%%%%%%%%%%%%%%%%%%%%%%%%%%%%%%%%%%%%%%%%%%%%%%%%%%%%%%%%%%%%%%%%%%%%%%%%%%%%%%%%%%%%%%%%%%%%%%%%%%%%%%%%%%%%%%%%%%%%%%%%%%%%%%%%%

\section{The proof of Theorem \ref{mainthm1}}\label{mainthm1-proof}

Let $x,h, Q, \epsilon$ and $A$ as in the statement of the first part of Theorem \ref{mainthm1}. We may assume that $x$ is large enough in terms of $A$ and $\epsilon$. Throughout the rest of the paper, we set
\[
\CL = \log x.
\]
Since $h\ge x^{1-2/c+\epsilon}$ by assumption, relation \eqref{Q=1} and the comments following it imply that it is enough to prove that
\eq{mainthm1-alt1}{
\int_x^{2x} \sum_{q\le Q} E'(y,h;q) \dee y \ll_{A,\epsilon} \frac{hx}{\CL^A} ,
}
where
\[
E'(x,h;q) := \max_{(a,q)=1} \left|  \sum_{\substack{ x < p \le x + h  \\  p\equiv a\mod q}} \log p  
	- \frac{1}{\phi(q)} \sum_{\substack{x<p\le x+h \\(p,q)=1 }}\log p\right| .
\]
(Notice that the condition $(p,q)=1$ in the second sum is trivially satisfied for primes $p>Q$.) 

We further reduce this relation to the bound
\eq{mainthm1-alt2}{
\int_x^{4x}\sum_{q\le Q}E''(y,\eta y;q)\dee y
	\ll_{\epsilon,A}  \frac{\eta x^2}{\CL^A} ,
}
where
\[
\eta := \frac{h}{x\CL^{A+1}} = x^{\theta-1} \CL^{-A-1}.
\] 
Indeed, covering $(y,y+h]$ by intervals of the form $(y_j,y_{j+1}]$, where $y_j=(1+\eta)^jy$, implies that
\als{
E'(y,h;q) &\le  \sum_{1\le (1+\eta)^j\le 1+h/y} E'(y_j,\eta y_j;q)
		+  O\left(\frac{\eta x}{\phi(q)}\right) ,
}
by the Brun-Titchmarsch inequality. So
\als{
\int_{x}^{2x} \sum_{q\le Q}
	E'(y,h;q) \dee y 
	&\le \sum_{1\le(1+\eta)^j\le 1+h/x} \frac{1}{(1+\eta)^j} \int_{(1+\eta)^jx}^{2(1+\eta)^jx} 
		\sum_{q\le Q}  E'(y,\eta y;q)  \dee y
		+ O(\eta x^2\CL ) \\
	&\ll  \frac{h}{\eta x}\int_x^{4x}\sum_{q\le Q}E'(y,\eta y;q) \dee y
	+   \eta x^2 \CL, 
}
which implies Theorem \ref{mainthm1} if relation \eqref{mainthm1-alt1} holds.  So from now on we focus on proving this relation.

Moreover, we need to remove certain `bad' moduli from our sum. Set
\[
\sigma_0= 1- \frac{c_0\log\CL}{\CL} ,
\]
where $c_0$ is some large constant to be determined later and let $\mathcal{E}$ be the set of moduli $q\le Q$ which are multiples of integers $d$ modulo which there exists a primitive Dirichlet character $\chi$ such that $N(\sigma_0,x,\chi)\ge1$. Note that any such $d$ must be greater than $D_1:=\CL^{4c_0+A+M+3}$, where $M$ is the constant in relation \eqref{zero-density}. This a consequence of the Korobov-Vinogradov zero-free region for $L(s,\chi)$ (see the notes of Chapter 9 in \cite{Mon;1994}) and of Siegel's theorem \cite[p. 126]{Dav;2000}. Therefore
\als{
\int_x^{4x}  \sum_{q\in\CE }  E'(y,\eta y;q)  \dee y
	\ll  \sum_{q\in\CE  }\frac{\eta x^2}{\phi(q)}
	& \le  \eta x^2\CL \sum_{D_1\le d\le Q}\ \sideset{}{^*}\sum_{\chi\mod d} N(\sigma_0,x,\chi)
		\sum_{q\le Q,\,d|q}\frac1{q} \\
	&\ll  \eta x^2\CL^2 \sum_{D_1\le d\le Q}\frac1{d} 
		\ \sideset{}{^*}\sum_{\chi\mod d}N(\sigma_0,x,\chi)
}
by the Brun-Titchmarsch inequality. Splitting the range of $d$ into $O(\CL)$ intervals of the form $[D,2D]$ and applying relation \eqref{zero-density} with $c\le 4$ to each one of them , we find that
\eq{exceptional}{
\int_x^{4x} \sum_{q\in \CE } E'(y,\eta y;q) \dee y
	\ll \eta x^2\CL^{M+2} \max_{D_1 \le D\le 2Q} \frac{(D^2x)^{4(1-\sigma_0)}}{D}
	 \ll \frac{\eta x^{2+4(1-\sigma_0) }}{ \CL^{4c_0+A} }  = \frac{\eta x^2}{\CL^A} .
}
Therefore, instead of \eqref{mainthm1-alt2}, it suffices to show
\eq{mainthm1-alt2b}{
\int_x^{4x}\sum_{\substack{q\le Q \\ q\notin \CE }} E'(y,\eta y;q)\dee y
	\ll_{\epsilon,A}  \frac{\eta x^2}{\CL^A} ,
}

Next, let $\Lambda(n)$ be the von Mangoldt function, defined to be $\log p$ if $n=p^k$ for some $k$, and 0 otherwise, and set
\[
E''(x,h;q) = \max_{(a,q)=1} \left|  \sum_{\substack{ x < n \le x + h  \\  n\equiv a\mod q}} \Lambda(n)  
	- \frac{1}{\phi(q)} \sum_{\substack{y<n\le y+h \\ (n,q)=1}} \Lambda(n) \right|.
\]
Then we have that
\[
E'(y,\eta y;q) - E''(y,\eta y,q) 
	\ll \sum_{\substack{y<p^k\le y+\eta y \\ k\ge2}} \log p 
	 \ll (\eta\sqrt{x}+1)\CL^2
\]
for every $y\in[x,2x]$. Since $Q^2\le \eta x^{1-\epsilon/2} \le x^{1-\epsilon/2}$, we deduce that
\als{
\int_x^{2x} \sum_{q\le Q} |E'(y,\eta y;q) - E''(y,\eta y;q)| \dee y \ll (\eta x^{3/2}Q+xQ)\CL^2 \ll_{\epsilon,A} \frac{\eta x^2}{\CL^A},
}
thus reducing relation \eqref{mainthm1-alt2b} to showing that
\eq{mainthm1-alt3}{
\int_x^{2x} \sum_{\substack{q\le Q \\ q\notin\CE }} E''(y,\eta y;q) \dee y \ll_{A,\epsilon} \frac{\eta x^2}{\CL^A} .
}

The next step is to switch from arithmetic progressions to sums involving Dirichlet characters. Given an arithmetic function $f:\N\to\C$, we set
\[
S(y,h;f)= \left|\sum_{y<n\le y+h} f(n)\right| .
\]
If $\chi$ is induced by $\chi_1$, then we have that
\eq{(n,q)>1}{
\left| S(y,\eta y;\Lambda\chi) - S(y,\eta y;\Lambda\chi_1) \right|
	\le \sum_{ \substack{y< n\le y(1+\eta) \\ (n,q)>1 } } \Lambda(n) 
	\le \omega(q)\log(2y)\ll \CL^2,
}
uniformly in $q\le x$ and $y\le3x$. So for such choices of $q$ and $y$, we have that
\eq{e10}{
E''(y,\eta y;q)
	\le  \frac1{\phi(q)}\sum_{\substack{\chi\mod q \\ \chi\neq\chi_0}}  S(y,\eta y;\chi)
	=   \frac1{\phi(q)}\sum_{d|q,\,d>1}\ \sideset{}{^*}\sum_{\chi\mod d} S(y,\eta y;\chi)
		+  O(\CL^2).
}
Therefore, using the inequality $\phi(dm)\ge\phi(d)\phi(m)$, we find that
\als{
\sum_{\substack{q\le Q \\ q\notin\CE}} E''(y,\eta y;q) 
	&\le  \sum_{\substack{q\le Q \\ q\notin\CE}} \frac1{\phi(q)}\sum_{d|q,\,d>1}
		\ssum{\chi\mod d} S(y,\eta y;\Lambda\chi)
		+ O(Q\CL^2) \\
	&\le \sum_{\substack{1<d\le Q\\d\notin\CE}} \ssum{\chi\mod d} S(y,\eta y;\Lambda\chi) 
		 \sum_{\substack{q\le Q \\ d|q}} \frac1{\phi(q)}
		+ O(Q\CL^2) \\
	&\ll  \CL \sum_{\substack{1<d\le Q\\d\notin\CE}} \frac{1}{\phi(d)} \ssum{\chi\mod d} S(y,\eta y;\Lambda\chi)  + O(Q\CL^2)  .
}
Hence breaking the interval $(1,Q]$ into $O(\CL)$ dyadic intervals of the form $(D,2D]$ implies that
\[
\sum_{q\le Q} E''(y,\eta y;q) 
	\ll \CL^3 \max_{1\le D\le Q} \frac{1}{D} \sum_{\substack{D<d\le 2D\\d\notin\CE}} 
		\ssum{\chi\mod d} S(y,\eta y;\Lambda\chi) 
	 + O(Q\CL^2)  ,
\]
thus reducing relation \eqref{mainthm1-alt3} to showing that
\eq{mainthm1-alt4}{
\int_x^{4x} \sum_{\substack{D<d\le 2D\\d\notin\CE}}  \ssum{\chi\mod d} S(y,\eta y;\Lambda\chi) \dee y
 		\ll \frac{D\eta x^2}{\CL^{A+3}}     \qquad(1\le D\le Q) .
}

In order to prove \eqref{mainthm1-alt4}, we express $S(y,\eta y;\Lambda \chi)$ as a sum over zeroes of $L(s,\chi)$. Let $T_0$ be the unique number of the form $2^j-1$, $j\in\N$, lying in $(x/2,x]$. Applying \cite[p. 118, eqn. (9)]{Dav;2000}, we find that for $q\le x$
\als{
S(y,\eta y;\Lambda \chi)
	&\le \left| \sum_{\substack { \rho=\beta+i\gamma\neq0\,:\, L(\rho,\chi)=0 \\ 0\le\beta\le1,\,|\gamma|\le T_0}}
		\frac{(1+\eta)^\rho-1}{\rho} \cdot y^\rho \right|
			+  O(\CL^2)   \\
	&\le \sum_{1\le2^j\le x/2} \left| \sum_{\substack { \rho=\beta+i\gamma\neq0\,:\, L(\rho,\chi)=0
							\\ 0\le\beta\le1,\,2^j\le|\gamma|+1\le2^{j+1}}}
		\frac{(1+\eta)^\rho-1}{\rho} \cdot y^\rho \right|
			+  O(\CL^2)
}
Now, if $L(s,\chi)$ has no zeroes $\rho=\beta+i\gamma$ with $\beta\ge\sigma_0$ and $|\gamma|\le x$, then the functional equation implies that there are no zeroes with $0\le\beta\le1-\sigma_0$ and $|\gamma|\le x$ (except possibly for $\rho=0$, which excluded from our sum). So if we let
\[
\CZ(T,\chi)
	=  \{\rho=\beta+i\gamma:L(\rho,\chi)=0,\,1-\sigma_0\le\beta\le\sigma_0,\,T\le|\gamma|+1\le2T\},
\]
then we find that
\als{
&\int_x^{4x}\sum_{\substack{D<d\le 2D \\ d\notin\CE }} \ssum{\chi\mod d}
		S(y,\eta y;\Lambda \chi ) \dee y \\
	&\quad\ll \CL \max_{1\le T\le x} \int_x^{4x}  \sum_{D<d\le 2D} \ssum{\chi\mod d}
			\left| \sum_{\rho\in \CZ(T,\chi)} \frac{(1+\eta)^\rho-1}{\rho} \cdot y^\rho \right|  \dee y 
			+ D^2x\CL^2 .
}
The second error term is $\ll D\eta x^2/\CL^{A+3}$, since $D\le Q\le \sqrt{\eta x}/x^{\epsilon/2}$. In order to treat the first error term, we apply the Cauchy-Schwarz inequality. This reduces \eqref{mainthm1-alt4} to showing that
\eq{main 1}{
R(D,T):=\int_x^{4x} \sum_{D<d\le 2D}\ \sideset{}{^*}
	\sum_{\chi\mod d} \left| \sum_{\rho\in \CZ(T,\chi)} \frac{(1+\eta)^\rho-1}{\rho} \cdot y^\rho \right|^2 \dee y
	\ll\frac{\eta^2x^3}{\CL^{2A+8}}
}
for $1\le D^2\le\eta x^{2/c-\epsilon/2}$ and $1\le T\le x$. Using the identity $|z|^2=z\overline{z}$ to expand the square of the absolute value in~\eqref{main 1} and then integrating over $y\in[x,4x]$, we find that
\als{
R(D,T)
	&=\sum_{d\le D}\ \sideset{}{^*}\sum_{\chi\mod d}
		\sum_{\rho_1,\rho_2\in \CZ(T,\chi)}
			 \frac{(1+\eta)^{\rho_1}-1}{\rho_1}\frac{(1+\eta)^{\overline{\rho_2}}-1}{\overline{\rho_2}}
			\cdot \frac{(4x)^{\rho_1+\overline{\rho_2}+1}
			-  x^{\rho_1+\overline{\rho_2}+1}}{\rho_1+\overline{\rho_2}+1}  \\
	&\ll x^3 \CL^2 \min\left\{\eta,\frac{1}{T}\right\}^2   \sum_{d\le D}\ \sideset{}{^*}	
		\sum_{\chi\mod d} \sum_{\rho_1,\rho_2\in \CZ(T,\chi)}
			\frac{x^{\beta_1+\beta_2-2}}{1+|\gamma_1-\gamma_2|}   ,
}
where we have written $\rho_j=\beta_j+i\gamma_j$ for $j\in\{1,2\}$. Since $x^{\beta_1+\beta_2}\le x^{2\beta_1}+x^{2\beta_2}$ and 
\[
\sum_{\rho_j\in \CZ(T,\chi)}\frac{1}{1+|\gamma_1-\gamma_2|}\ll \CL^2  \quad(j\in\{1,2\}) 
\]
(see for example \cite[p. 98, eqn (1) and (2)]{Dav;2000}), we deduce that
\[
R(D,T)
	\ll x^3 \CL^4   \min\left\{\eta,\frac{1}{T}\right\}^2
		\sum_{d\le D}\ \sideset{}{^*}
		\sum_{\chi\mod d} \sum_{\rho\in \CZ(T,\chi)} x^{2\beta-2} .
\]
This reduces \eqref{main 1} to proving that
\eq{main 2}{
R'(D,T):=\sum_{d\le D}\ \sideset{}{^*}
		\sum_{\chi\mod d} \sum_{\rho\in \CZ(T,\chi)} x^{2\beta-2}
	\ll \frac{\max\{\eta T,1\}^2}{\CL^{2A+12}}.
}
For each Dirichlet character $\chi\mod d$, there are at most $O(T\log(dT))$ zeroes $\rho\in\CZ(T,\chi)$ with $\beta\le1/2+1/\CL$ by \cite[p. 101, eq. (1)]{Dav;2000}. For the rest of the zeroes, note that $x^{2\beta-2}\ll \CL\int_{1/2}^\beta x^{2(\sigma-1)}d\sigma$ and consequently \eqref{zero-density} implies that
\als{
R'(D,T)
	&\ll \frac{D^2T\CL}{x} + \CL  \int_{1/2}^{\sigma_0}
		\sum_{d\le D}\ \sideset{}{^*}\sum_{\chi\mod d} N(\sigma,T,\chi)  \frac{\dee \sigma}{x^{2(1-\sigma)}}  \\
	&\ll \frac{D^2T\CL}{x} +   \CL^{M+1} \int_{1/2}^{\sigma_0}
		\left( \frac{ (D^2T)^c } {x^2} \right)^{1-\sigma} \dee\sigma.
}
Since $c\in[2,4]$ and $D^2/\eta\le x^{2/c-\epsilon/2}$, we conclude that
\als{
\frac{R'(D,T)}{\max\{1,\eta T\}^2}
	&\ll\frac{D^2\CL}{\eta x} + \CL^{M+1} \int_{1/2}^{\sigma_0}\left( \frac{ (D^2/\eta)^c} {x^2} \right)^{1-\sigma}\dee\sigma \\
	&\ll\frac{\CL}{x^{1-2/c+\epsilon/2}} + \frac{\CL^{M+2}}{x^{c\epsilon(1-\sigma_0)/2}} 
		\ll \frac{1}{\CL^{c_0c\epsilon/2-M-2}} .
}
Choosing $c_0=2(2A+M+14)/(c\epsilon)$ then proves \eqref{main 2} thus completing the proof of Theorem \ref{mainthm1}.

%%%%%%%%%%%%%%%%%%%%%%%%%%%%%%%%%%%%%%%%%%%%%%%%%%%%%%%%%%%%%%%%%%%%%%%%%%%%%%%%%%%%%%%%%%%%%%%%%%%%%%%%%%%%%%%%%%%%%%%%%%%%%%%%%%%%%%%%%%%%%%%%%%%%%%%%%%%%%%%%%%%%%%%%%%%%%%%%%%%%%%%%%%%%%%%%%%%%%%%%%%%%%%%%%%%%%%%%%%%%%%%%%%%%%%%%%%%%%%%%%%%%%%%%%%%%%%%%%%%%%%%%%%%%%%%%%%%%%%%%%%%%%%%%%%%%%%%%%%%%%%%%%%%%%%%%%%%%%%%%%%%%%%%%%%%%%%%%%%%%%%%%%%%%%%%%%%%%%%%%%%%%%%

\section{Some auxiliary results}\label{aux}

Before we embark on the main part of the proof of Theorems \ref{mainthm2} and \ref{mainthm3}, we state here the main technical tools we will use. The first one is a result due to Montgomery and Gallagher\ \cite[Theorem 7.1]{Mon;1971}.

%%%%%%%%%%%%%%%%%%%%%%%%%%%%%%%%%%%%%%%%%%%%%%%%%%%%%%%%%%%%%%%%%%%%%%%%%%%%%%%%%%%%%%%%%%%%%%%%%%%%%%%%%%%%%%%%%%%%%%%%%%%%%%%%%%%%%%%%%%%%%%%%%%%%%%%%%%%%%%%%%%%%%%%%%%%%%%%%%%%%%%%%%%

\begin{lem}\label{large sieve} 
Let $\{a_n\}_{n=1}^N$ be a sequence of complex numbers. For $Q\ge1$ and $T\ge1$ we have that
\[
\sum_{q\le Q}\ssum{\chi\mod q}
		\int_{-T}^T  \abs{ \sum_{n=1}^N\frac{a_n\chi(n)}{n^{it}} }^2\dee t
	\ll  (Q^2T+N)   \sum_{n=1}^N|a_n|^2.
\]
\end{lem}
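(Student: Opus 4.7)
The plan is to combine the classical multiplicative large sieve for primitive Dirichlet characters with Gallagher's lemma, which converts an $L^2$-integral of a Dirichlet polynomial over a bounded $t$-interval into a logarithmic mean of short partial sums. Specifically, Gallagher's lemma in its Dirichlet-series form asserts that, for any complex coefficients $b_n$ supported on $n\le N$,
\[
\int_{-T}^T\left|\sum_{n=1}^N\frac{b_n}{n^{it}}\right|^2 dt \;\ll\; T^2\int_0^\infty\left|\sum_{y<n\le y e^{1/T}}b_n\right|^2\frac{dy}{y}.
\]
I would prove this by writing $n^{-it}=e^{-it\log n}$, so that the left-hand integrand is $|\hat\mu(t)|^2$ for the atomic measure $\mu=\sum_n b_n\delta_{\log n}$ on $\R$, applying the Fourier-integral form of Gallagher's lemma (which bounds $\int_{-T}^T|\hat\mu(t)|^2\,dt$ by a constant times $T^2$ times the $L^2$-norm of the averages of $\mu$ over windows of length $1/T$), and changing variable $u=\log y$.

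With that in hand, I would take $b_n=a_n\chi(n)$, swap the sum over $(q,\chi)$ with the integral in $y$, and thereby reduce the left-hand side of the lemma to a constant times
\[
T^2\int_0^\infty \sum_{q\le Q}\ssum{\chi\mod q}\left|\sum_{y<n\le y e^{1/T}}a_n\chi(n)\right|^2\frac{dy}{y}.
\]
For each fixed $y$, the inner double sum is bounded by the classical multiplicative large sieve
\[
\sum_{q\le Q}\ssum{\chi\mod q}\left|\sum_{M<n\le M'}c_n\chi(n)\right|^2 \;\ll\; (Q^2+M'-M)\sum_{M<n\le M'}|c_n|^2,
\]
and since $(y,y e^{1/T}]$ contains at most $O(1+y/T)$ integers, the whole expression is controlled by a constant times
\[
T^2\int_0^\infty\left(Q^2+\frac{y}{T}+1\right)\sum_{y<n\le y e^{1/T}}|a_n|^2\frac{dy}{y}.
\]

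To finish, I would interchange the integral and the sum over $n$. A given integer $n\le N$ lies in $(y,y e^{1/T}]$ precisely when $y\in[n e^{-1/T},n)$, an interval of length $\ll n/T\le N/T$ and of logarithmic length $\log e^{1/T}=1/T$. Thus the $Q^2+1$ piece contributes $T^2\cdot(Q^2+1)\cdot T^{-1}\sum_n|a_n|^2\ll TQ^2\sum_n|a_n|^2$ (absorbing the harmless $T\sum_n|a_n|^2$ into it, since $Q\ge 1$), and the $y/T$ piece contributes $T\cdot(N/T)\sum_n|a_n|^2=N\sum_n|a_n|^2$. Adding these yields the claimed bound $(Q^2T+N)\sum_n|a_n|^2$. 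The one nontrivial input is Gallagher's lemma itself, which I expect to be the main obstacle; the rest of the argument is routine bookkeeping about the geometry of the short intervals $(y,y e^{1/T}]$.
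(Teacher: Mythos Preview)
The paper does not prove this lemma: it simply cites it as Theorem~7.1 of Montgomery's monograph \cite{Mon;1971}. Your argument is correct and is in fact the standard proof of this hybrid large sieve inequality---Gallagher's lemma converts the $t$-integral into short-interval sums, and the multiplicative large sieve handles the character average; the bookkeeping you describe for the contributions of the $Q^2$ and $y/T$ terms is accurate.
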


%%%%%%%%%%%%%%%%%%%%%%%%%%%%%%%%%%%%%%%%%%%%%%%%%%%%%%%%%%%%%%%%%%%%%%%%%%%%%%%%%%%%%%%%%%%%%%%%%%%%%%%%%%%%%%%%%%%%%%%%%%%%%%%%%%%%%%%%%%%%%%%%%%%%%%%%%%%%%%%%%%%%%%%%%%%%%%%%%%%%%%%%%%

Combining Lemma \ref{large sieve} with a result due to Huxley \cite[Theorem 9.18, p.247]{IK;2004}, we have the following estimate on the frequency of large values of Dirichlets polynomials twisted by Dirichlet characters. Here and for the rest of the paper, given a set
\[
\CR\subset\{(t,\chi): t\in\R,\ \chi\ \text{is a Dirichlet character}\},
\]
we say that $\CR$ is well-spaced if for each $(t,\chi),(t',\chi)\in\CR$ with $t\neq t'$, we have that $|t-t'|\ge1$. Moreover, we let $\tau_m$ denote the number of ways to write $n$ as a product of $m$ positive integers.

%%%%%%%%%%%%%%%%%%%%%%%%%%%%%%%%%%%%%%%%%%%%%%%%%%%%%%%%%%%%%%%%%%%%%%%%%%%%%%%%%%%%%%%%%%%%%%%%%%%%%%%%%%%%%%%%%%%%%%%%%%%%%%%%%%%%%%%%%%%%%%%%%%%%%%%%%%%%%%%%%%%%%%%%%%%%%%%%%%%%%%%%%%

\begin{lem}\label{large values} Fix $m\in\N$ and $r\ge0$ and let $\{a_n\}_{n=1}^N$ be a sequence of complex numbers such that $|a_n|\le \tau_m(n)(\log n)^r$ for all $n\le N$. For each Dirichlet character $\chi$, we set $A(s,\chi) = \sum_{n=1}^N a_n\chi(n)/n^s$ and we consider a well spaced set 
\[
\CR \subset \bigcup_{q\le Q}\bigcup_{\substack{\chi\mod q \\ \chi\ \text{primitive}}} 
	\left\{(t,\chi): t\in\R,\    \abs{ A(1/2+it ,\chi )} \ge U \right\} ,
\]
where $U\ge1$, $Q\ge1$ and $T\ge1$ are some parameters. If $H=Q^2T$, then
\[
|\CR| \ll_{m,r} \min\left\{ \frac{N+H}{U^2}, \frac{N}{U^2}+\frac{NH}{U^6} \right\} (\log 2N)^{3m^2+6r+18} .
\]
\end{lem}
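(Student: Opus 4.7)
Proof proposal: Both bounds follow from Lemma \ref{large sieve}, the first essentially directly and the second via Huxley's higher-moment argument (as in Theorem 9.18 of \cite{IK;2004}).

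I begin by reducing the pointwise bound $|A(\tfrac12+it_j,\chi_j)|\ge U$ to a mean value estimate. A standard Sobolev-type inequality gives
$$|A(\tfrac12+it_j,\chi_j)|^2\ll\int_{t_j-1/2}^{t_j+1/2}\!\bigl(|A(\tfrac12+it,\chi_j)|^2+|A'(\tfrac12+it,\chi_j)|^2\bigr)\,dt,$$
and after splitting $\CR$ into two well-spaced subfamilies making the intervals $[t_j-1/2,t_j+1/2]$ disjoint, summation combined with Lemma \ref{large sieve} (applied both to $A$ and to the companion polynomial with coefficients $a_n\log n$) yields
$$|\CR|\,U^2\ll(N+H)\sum_{n\le N}\frac{|a_n|^2(\log 2n)^2}{n}\ll(N+H)(\log 2N)^{m^2+2r+2},$$
where I used $\sum_{n\le N}\tau_m(n)^2/n\ll(\log 2N)^{m^2}$. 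This is the first bound appearing in the minimum.

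For the second bound, apply the same reasoning to the cube $B(s,\chi):=A(s,\chi)^3$, which has length $N^3$ and coefficients $b_n=\sum_{n_1n_2n_3=n}a_{n_1}a_{n_2}a_{n_3}$ bounded by $\tau_{3m}(n)(\log 2n)^{3r}$, so that $\sum_{n\le N^3}|b_n|^2/n\ll(\log 2N)^{9m^2+6r}$. Since $|B(\tfrac12+it_j,\chi_j)|\ge U^3$ at every point of $\CR$, one obtains
$$|\CR|\,U^6\ll(N^3+H)(\log 2N)^{9m^2+6r+2}.$$
The sharper hybrid bound $|\CR|\ll N/U^2+NH/U^6$ then comes from Huxley's interpolation: factor $A=A_1A_2$ using a smooth partition of unity on the divisors of $n$, apply Lemma \ref{large sieve} separately to the shorter factor, and balance the two lengths. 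This has the effect of replacing $N^3$ by $NH/U^4$ in the regime where the cube bound dominates.

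The main obstacle is the execution of Huxley's interpolation, in particular verifying that the product factorization $A=A_1A_2$ can be carried out while preserving $\tau_m$-type coefficient bounds, so that the large sieve applies cleanly to the shorter factor. The bookkeeping of log factors finally pins down the exponent at $3m^2+6r+18$: the $3m^2$ reflects the divisor-function cost from the cube (as moderated by the factorization), the $6r$ comes from cubing the $(\log n)^r$ weight, and the constant $18$ absorbs universal losses from the Sobolev reduction, the well-spacing split, the dyadic/smooth decompositions, and Cauchy--Schwarz.
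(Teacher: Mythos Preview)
Your treatment of the first bound $(N+H)/U^2$ is correct and matches the paper's implicit route via Lemma~\ref{large sieve}.

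The gap is in the second bound. Cubing $A$ and applying the mean value gives only $|\CR|\ll (N^3+H)/U^6$, which in the relevant range $U\le\sqrt N$ is \emph{weaker} than both terms in the lemma; you recognize this, but your proposed remedy---factoring $A=A_1A_2$ ``using a smooth partition of unity on the divisors of $n$'' and applying the large sieve to the shorter factor---does not work. The coefficients $a_n$ are arbitrary subject only to $|a_n|\le\tau_m(n)(\log n)^r$, so $A(s,\chi)$ has no reason to split as a product of two shorter Dirichlet polynomials, and a partition of unity on divisors produces a \emph{sum} of pieces, not a product. (In Section~\ref{mainthm2-proof} the polynomial $F$ genuinely is a product, but that is a feature of the application, not of the lemma.) Huxley's actual argument, to which the paper defers via \cite[Theorem~9.18]{IK;2004}, proceeds through the Hal\'asz--Montgomery duality inequality instead: with $G=\sum_n|a_n|^2/n$ one bounds $\sum_{r}|A(\tfrac12+it_r,\chi_r)|^2$ by $G$ times an expression involving only the kernels $K(r,r')=\sum_{n\le N}\chi_{r'}\bar\chi_r(n)\,n^{i(t_r-t_{r'})}$, which do not depend on the $a_n$ at all. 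The off-diagonal $K(r,r')$ are then controlled by hybrid mean-value/fourth-moment bounds for $L$-functions (this is where $H=Q^2T$ enters), and after solving the resulting inequality for $R=|\CR|$ one obtains $R\ll GN/U^2+G^3NH/U^6$. The cube $G^3$ is the true source of the exponent $3m^2+6r$; no factorization of $A$ is involved.
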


%%%%%%%%%%%%%%%%%%%%%%%%%%%%%%%%%%%%%%%%%%%%%%%%%%%%%%%%%%%%%%%%%%%%%%%%%%%%%%%%%%%%%%%%%%%%%%%%%%%%%%%%%%%%%%%%%%%%%%%%%%%%%%%%%%%%%%%%%%%%%%%%%%%%%%%%%%%%%%%%%%%%%%%%%%%%%%%%%%%%%%%

Finally, we need the following result which allows us to pass from a sum of characters of length $N$ to a shorter sum when $N$ is large enough. Its proof is a standard application of ideas related to the approximate functional equation of $L$-functions (for example, see Section 9.6 in \cite{IK;2004}).

%%%%%%%%%%%%%%%%%%%%%%%%%%%%%%%%%%%%%%%%%%%%%%%%%%%%%%%%%%%%%%%%%%%%%%%%%%%%%%%%%%%%%%%%%%%%%%%%%%%%%%%%%%%%%%%%%%%%%%%%%%%%%%%%%%%%%%%%%%%%%%%%%%%%%%%%%%%%%%%%%%%%%%%%%%%%%%%%%%%%%%%

\begin{lem}\label{charsums}
Let $\chi$ be a primitive Dirichlet character modulo $q\in(1,Q]$, $g:[0,+\infty)\to[0,+\infty)$ be a smooth function supported on $[1,4]$, $t\in\R$, $N\ge1$ and $r\in\Z_{\ge0}$. If $|t|\le T$ for some $T\ge2$, and $M=\max\{1,(QT/N)^{1+\delta}\}$ for some fixed $\delta>0$, then
\[
\sum_{n=1}^\infty \frac{g(n/N)\chi(n)(\log n)^r}{n^{1/2+it}} 
	\ll_{r,\delta} (\log2N)^r
			\int_{-\infty}^{\infty} 
				\left|\sum_{n\le M} \frac{\chi(n)}{n^{1/2+i(u+t)}}\right|
				\frac{\dee u}{1+u^2}  .
\]
\end{lem}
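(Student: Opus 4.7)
My plan is to prove the lemma via Mellin inversion combined with the functional equation of $L(s,\chi)$. First, I reduce to the case $r=0$ by the binomial expansion $(\log n)^r = \sum_{k=0}^{r}\binom{r}{k}(\log N)^{r-k}(\log(n/N))^k$, which replaces $g$ with smooth weights $g(y)(\log y)^k$ (still supported in $[1,4]$) and contributes the factor $(\log 2N)^r$. By Mellin inversion of $g$ (valid for any $c\in\R$ since $g$ is smooth and supported away from zero) and the entirety of $L(s,\chi)$ for primitive non-trivial $\chi$,
\[
S = \frac{1}{2\pi i}\int_{(c)}\tilde g(s) N^s L(1/2+it+s, \chi)\,ds,
\]
where $|\tilde g(\sigma+i\tau)|\ll_{A,\sigma_1,\sigma_2}(1+|\tau|)^{-A}$ uniformly in vertical strips, obtained by repeated integration by parts in the definition of $\tilde g$.

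Next, I split $L(w,\chi) = U(w) + V(w)$, where $U(w) = \sum_{n\le M}\chi(n)/n^w$ and $V := L - U$, both entire. I apply the functional equation $L(w,\chi) = X_\chi(w) L(1-w,\bar\chi) = X_\chi(w)[\bar U(1-w) + \bar V(1-w)]$ to the $L$-term inside $V$; the two $-U(1/2+it+s)$ contributions then cancel, yielding $S = I_1 + I_2$ with
\[
I_j = \frac{1}{2\pi i}\int_{(c)}\tilde g(s) N^s X_\chi(1/2+it+s)\Xi_j(s)\,ds,
\]
where $\Xi_1(s) = \bar U(1/2-it-s)$ and $\Xi_2(s) = \bar V(1/2-it-s)$. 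For $I_1$, I shift the contour to $\mathrm{Re}(s)=0$: on the critical line $|X_\chi|=1$ (a standard property of primitive $\chi$ via the Gamma-ratio symmetry), and $|\bar U(1/2-i(t+\tau))| = |U(1/2+i(t+\tau))|$ by conjugation. Combined with $|\tilde g(i\tau)|\ll 1/(1+\tau^2)$, this gives
\[
|I_1| \ll \int_{-\infty}^{\infty}\frac{|U(1/2+i(t+u))|}{1+u^2}\,du,
\]
matching the desired bound.

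For $I_2$, I shift the contour to $\mathrm{Re}(s)=-c_0$ with $c_0=1/\delta$ (no poles crossed): on this line $\bar V$ lies in its region of absolute convergence with $|\bar V(1/2-it-s)|\ll M^{1/2-c_0}$; Stirling gives $|X_\chi(1/2+it+s)|\ll (q(|t|+|\tau|+1))^{c_0}$; and $|N^s|=N^{-c_0}$. Combining with the rapid decay of $\tilde g$ yields
\[
|I_2| \ll M^{1/2-c_0}\left(\frac{QT}{N}\right)^{c_0} \ll \left(\frac{QT}{N}\right)^{(1+\delta)/2 - \delta c_0} = \left(\frac{QT}{N}\right)^{-(1-\delta)/2},
\]
which is $O(1)$ in the non-degenerate regime $QT\ge N$. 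When $QT<N$ one has $M=1$ and $U\equiv 1$, and the lemma reduces to $|S|\ll 1$, which follows from Poisson summation applied to the character sum with the smooth weight $g(\cdot/N)/\cdot^{1/2+it}$.

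The main obstacle will be showing that the $O(1)$ contribution from $I_2$ is absorbed by $\int |U(1/2+i(t+u))|\,du/(1+u^2)$ in the non-degenerate regime: this requires a lower bound $\int|U|/(1+u^2)\,du\gg_\delta 1$, which I would establish via the Plancherel-type identity
\[
\int_{-\infty}^{\infty}\frac{|U(1/2+iw)|^2}{1+w^2}\,dw = \pi\sum_{k\ge 1}\left|\sum_{k\le m\le M}\frac{\chi(m)}{m}\right|^2,
\]
using the $k=M$ term to secure positivity and combining with Cauchy--Schwarz. The parameter $\delta>0$ in $M=(QT/N)^{1+\delta}$ provides exactly the slack needed for Stirling's decay of the Gamma ratio in $X_\chi$ to dominate the tail of the dual Dirichlet series.
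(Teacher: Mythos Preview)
Your approach is essentially the same as the paper's: Mellin inversion, the functional equation, truncation of the dual Dirichlet series at $M$, and separate contour shifts for the truncated and tail pieces. The paper organizes the truncation term-by-term (shifting to $\Re(s)=A$ for each $n>M_0$) whereas you bundle the tail into $\bar V$ and shift once; these are equivalent.

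The gap is in your absorption of the $I_2$ contribution. Two problems:
\begin{itemize}
\item Your Plancherel identity is incorrect. A direct computation gives
\[
\int_{-\infty}^{\infty}\frac{|U(1/2+iw)|^2}{1+w^2}\,dw
= \pi\sum_{m,n\le M}\chi(m)\bar\chi(n)\,\frac{\min(m,n)^{1/2}}{\max(m,n)^{3/2}},
\]
which does not equal $\pi\sum_{k}|\sum_{k\le m\le M}\chi(m)/m|^2$ (check the off-diagonal $m=1,n=2$). Moreover, the integral you need has the shifted weight $1/(1+(w-t)^2)$, which introduces an extra phase $(m/n)^{-it}$ and breaks the positivity structure you are relying on.
\item Even granting a correct identity of this shape, the ``$k=M$ term'' contributes at most $|\chi(M)|^2M^{-2}$, which vanishes when $(M,q)>1$ and is $\ll M^{-2}$ otherwise; it does not give $\gg 1$.
\end{itemize}

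The fix is simple and is what the paper does: do not pin $c_0=1/\delta$. Your own estimate gives $|I_2|\ll (QT/N)^{(1+\delta)/2-\delta c_0}$, so for any $A$ you may choose $c_0$ large enough that $|I_2|\ll_\delta M^{-A}$. Then one only needs a very weak lower bound on the main integral, which the paper obtains from the mean value theorem for Dirichlet polynomials:
\[
\int_{-M^2}^{M^2}\Bigl|\sum_{n\le M}\chi(n)n^{-1/2-i(t+u)}\Bigr|^2du \gg M^2,
\]
whence (using $|U|\ll M^{1/2}$ and $1/(1+u^2)\gg M^{-4}$ on $|u|\le M^2$) one gets $\int|U|/(1+u^2)\,du\gg M^{-5/2}$. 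Taking $A=5/\delta$ then absorbs $I_2$. (Incidentally, your $I_1,I_2$ bounds already give $|S|\ll 1$ directly when $M=1$, so the separate Poisson argument is unnecessary.)
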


%%%%%%%%%%%%%%%%%%%%%%%%%%%%%%%%%%%%%%%%%%%%%%%%%%%%%%%%%%%%%%%%%%%%%%%%%%%%%%%%%%%%%%%%%%%%%%%%%%%%%%%%%%%%%%%%%%%%%%%%%%%%%%%%%%%%%%%%%%%%%%%%%%%%%%%%%%%%%%%%%%%%%%%%%%%%%%%%%%%%%%%

\begin{proof} It suffices to consider the case $r=0$. Indeed, for the general case, note that
\[
(\log n)^r = (\log N+ \log(n/N))^r  = \sum_{j=0}^r \binom{r}{j} (\log N)^j (\log(n/N))^{r-j}.
\]
So the general case follows by the case $r=0$ applied with $g\log^{r-j}$, $0\le j\le r$, in place of $g$. Moreover, we may assume that $\delta\le 1/2$. 

Set $s_0=1/2+it$ and let $\hat{g}(w)=\int_0^\infty g(u)u^{w-1}\dee u$ be the Mellin transform of $g$. Then
\als{
\sum_{n=1}^\infty \frac{g(n/N)\chi(n)}{n^{s_0}}
	&= \frac{1}{2\pi i} \int_{\Re(w)=3/4} L(s_0+w,\chi) N^w \hat{g}(w) \dee w \\
	&= \frac{1}{2\pi i} \int_{\Re(w)=-3/4} L(s_0+w,\chi) N^w \hat{g}(w) \dee w ,
}
by Cauchy's theorem, since $\hat{g}$ is entire by our assumption that $g$ is supported on $[1,4]$. We make the change of variable $s=1-w-s_0=\bar{s_0}-w$ so that
\[
\sum_{n=1}^\infty \frac{g(n/N)\chi(n)}{n^{s_0}}
	= \frac{N^{\bar{s_0}}}{2\pi i} \int_{\Re(s)=5/4} L(1-s,\chi) N^{-s}\hat{g}(\bar{s_0}-s) \dee s  .
\]
There is a complex number $\epsilon_\chi$ of modulus 1 such that
\[
L(1-s,\chi) = \frac{2\epsilon_\chi }{q^{1/2}} \left(\frac{q}{2\pi}\right)^{s} \gamma(s,\chi) L(s,\bar{\chi}), 
\]
where
\[
\gamma(s,\chi) = \Gamma(s) \cos\left(\frac{\pi(s-a)}{2}\right) 
\]
with $a=(1-\chi(-1))/2$. Therefore
\als{
\sum_{n=1}^\infty \frac{g(n/N)\chi(n)}{n^{1/2+it}} 
	&= \frac{\epsilon_\chi N^{\bar{s_0}} }{q^{1/2} \pi i } 
		\int_{\Re(s)=5/4} \left(\frac{q}{2\pi N}\right)^s \gamma(s,\chi)
			L(s,\bar{\chi})  \hat{g}(\bar{s_0}-s)  \dee s.
}
We expand the sum $L(s,\bar{\chi})$ and invert the order of integration and summation to find that
\als{
\sum_{n=1}^\infty \frac{g(n/N)\chi(n)}{n^{1/2+it}} 
	&= \frac{\epsilon_\chi N^{\bar{s_0}} }{q^{1/2}\pi i} 
			\sum_{n=1}^\infty \bar{\chi}(n)
			\int_{\Re(s)=5/4} \left(\frac{q}{2\pi nN}\right)^s \gamma(s,\chi)
				\hat{g}(\bar{s_0}-s)  \dee s.
}
We will show that the terms with $n>M_0:=(QT/N)^{1+\delta}$ contribute very little to the above sum. Indeed, for such an $n$, we shift the line of integration to the line $\Re(s)=A$. If $s=A+iu$, then we have that $\gamma(s,\chi)\ll_{A}1+|u|^{A-1/2}$ by Stirling's formula. Since $\hat{g}(A+iu)\ll_B 1/(1+|u|^B)$, for any $B>0$, we conclude that
\als{
\int_{\Re(s)=5/4} \left(\frac{q}{2\pi nN}\right)^s \gamma(s,\chi)
				\hat{g}(\bar{s_0}-s)  \dee s
	&=\int_{\Re(s)=A} \left(\frac{q}{2\pi nN}\right)^s \gamma(s,\chi)
				\hat{g}(\bar{s_0}-s)  \dee s \\
	&\ll_A\left(\frac{q}{nN}\right)^A \int_{-\infty}^{\infty} \frac{1+|u|^{A-1/2}}{1+|u-t|^B} \dee u  
		\ll_A \frac{1}{\sqrt{T}} \left(\frac{QT}{nN}\right)^A 
}
by taking $B=A+2$ and using our assumptions that $|t|\le T$ and that $q\le Q$. If $A\ge2/\delta$, then we conclude that
\als{
\sum_{n=1}^\infty \frac{g(n/N)\chi(n)}{n^{1/2+it}} 
	&= \frac{\epsilon_\chi N^{\bar{s_0}} }{q^{1/2}\pi i} 
			\sum_{n\le M_0} \bar{\chi}(n) 
			\int_{\Re(s)=5/4} \left(\frac{q}{2\pi nN}\right)^s \gamma(s,\chi)
				\hat{g}(\bar{s_0}-s)  \dee s
				+  O_{\delta,A}\left(  M^{-\delta A/2} \right) .
}
For the integers $n\le M_0$, we set $s=\bar{s_0}+w$, move the line of integration to the line $\Re(w)=0$ and invert the order of summation and integration to conclude that
\als{
\sum_{n=1}^\infty \frac{g(n/N)\chi(n)}{n^{1/2+it}} 
	&= \frac{\epsilon_\chi }{(2\pi)^{\bar{s_0}}q^{it}\pi i} 
		\int_{\Re(w)=0} \left(\frac{q}{2\pi N}\right)^{w}\gamma(\bar{s_0}+w,\chi) 
			\sum_{n\le M_0} \frac{\bar{\chi}(n)}{n^{\bar{s_0}+w}}  
				\hat{g}(-w)  \dee w \\
	&\qquad	+ O_{\delta,A}\left( M^{-\delta A/2} \right) .
}
If $w=-iu$, then $\gamma(\bar{s_0}+w,\chi)  \hat{g}(-w) \ll 1/(1+u^2)$. Therefore
\[
\sum_{n=1}^\infty \frac{g(n/N)\chi(n)(\log n)^r}{n^{1/2+it}} 
	\ll_{\delta,A}
			\int_{-\infty}^{\infty} 
				\left|\sum_{n\le M_0} \frac{\chi(n)}{n^{1/2+i(u+t)}}\right|
				\frac{\dee u}{1+u^2}  
		+ \frac{1}{M^{\delta A/2}} .
\]
If $M_0<1$, so that $M=1$, then the lemma follows immediately by the above estimate. If $M_0\ge1$, so that $M=M_0$, then the second term can be absorbed into the main term by taking $A$ large enough: we have that
\[
\int_{-M^2}^{M^2}\left|\sum_{n\le M}  \frac{\chi(n)}{n^{1/2+i(u+t)}} \right|^2 du
	\gg M^2
\]
by Theorem 9.1 in \cite{IK;2004}. Therefore
\[
\int_{-\infty}^{\infty} \left|\sum_{n\le M}  \frac{\chi(n)}{n^{1/2+i(u+t)}} \right| \frac{du}{1+u^2}
	\gg \frac{1}{M^{9/2}} \int_{-M^2}^{M^2}\left|\sum_{n\le M}  \frac{\chi(n)}{n^{1/2+i(u+t)}} \right|^2 du
	\gg \frac{1}{M^{5/2}} ,
\]		
which proves the lemma in the case when $M_0\ge1$ too by taking $A=5/\delta$.
\end{proof}

%%%%%%%%%%%%%%%%%%%%%%%%%%%%%%%%%%%%%%%%%%%%%%%%%%%%%%%%%%%%%%%%%%%%%%%%%%%%%%%%%%%%%%%%%%%%%%%%%%%%%%%%%%%%%%%%%%%%%%%%%%%%%%%%%%%%%%%%%%%%%%%%%%%%%%%%%%%%%%%%%%%%%%%%%%%%%%%%%%%%%%%
%%%%%%%%%%%%%%%%%%%%%%%%%%%%%%%%%%%%%%%%%%%%%%%%%%%%%%%%%%%%%%%%%%%%%%%%%%%%%%%%%%%%%%%%%%%%%%%%%%%%%%%%%%%%%%%%%%%%%%%%%%%%%%%%%%%%%%%%%%%%%%%%%%%%%%%%%%%%%%%%%%%%%%%%%%%%%%%%%%%%%%%

\section{The proof of Theorem \ref{mainthm2}}\label{mainthm2-proof}

Let $A,\epsilon$ and $x,h, Q$ be as in the statement of Theorem \ref{mainthm2}. All implied constants might depend on $\epsilon$ and $A$, as well as on the parameters $k_0,B,C$ and $\delta$, and the function $g$ introduced below.

Arguing as in Section \ref{mainthm1-proof}, we note that is enough to prove that
\[
\int_x^{4x} \sum_{D<d\le 2D} \ssum{\chi\mod d} S(y,\eta y;\Lambda\chi) \dee y
 		\ll \frac{D\eta x^2}{\CL^{A+3}}   \qquad(1\le D\le Q) ,
\]
where 
\[
\eta := \frac{h}{x\CL^{A+1}} =  x^{\theta-1}\CL^{-A-1} .
\]
(Here we don't need to remove the exceptional characters, even though we could do so using relation \eqref{zero-density} with $c=12/5+\epsilon$.) So from now on we fix $D\in[1,Q]$ and we set
\eq{beta-def}{
D^2 = \frac{\eta x}{x^\beta} = x^{\theta-\beta} \CL^{-A-1} ,
}
so that $\beta\in[\alpha+\epsilon/2,\theta]$. 

Next, we fix $k_0\in\N$ to be chosen later and use Heath-Brown's identity as in \cite[p.1367]{H-B;1982} to replace the von Mangoldt function by certain convolutions. Indeed, we have that
\[
\sum_{y<n\le y+\eta y} \Lambda(n)\chi(n)
	= \sum_{k=1}^{k_0} (-1)^{k-1}\binom{k_0}{k} 
		\sum_{\substack{y<n\le y+\eta y \\ n_{k+1},\dots,n_{2k}\le(4x)^{1/k_0} }}  (\log n_1)\mu(n_{k+1})\cdots \mu(n_{2k}) \chi(n_1\cdots n_{2k}) 
\]
for all $y\le 3x$. We break the range of $n_{k+1},\dots,n_{2k}$ into dyadic intervals $(N_{k+1},2N_{k+1}]$, \dots, $(N_{2k},2N_{2k}]$. For the range of $n_1,\dots,n_{2k}$ we will be more careful and use a smooth partition of unity: there is a smooth function $g:\R_{\ge0}\to\R_{\ge0}$ supported on $[1,4]$ such that
\[
\sum_{j\in\Z} g\left(\frac{x}{2^j}\right)    = 1  \qquad(x>0).
\]
Indeed, such a function can be constructed by fixing a smooth function $\tilde{g}:\R_{\ge0}\to\R_{\ge0}$ supported on $[1,2]$ such that $\int_0^\infty \tilde{g}(u) \dee u/u=1$, and by setting
\[
g(x) = \int_1^2 \tilde{g}(x/u) \frac{\dee u}{u} .
\]
Then we find that $S(y,\eta y;\Lambda\chi)$ can be bounded by $O(\CL^{2k_0})$ sums of the form $S(y,\eta y;f\chi)$, where $f=f_1*f_2*\cdots*f_{2k}$ for some $k\in\{1,\dots,k_0\}$ with 
\eq{f_j}{
f_j(n) = \begin{cases} 
		g(n/N_j) \log n		&\text{if}\ j=1,\\
		g(n/N_j) 			&\text{if}\ 2\le j\le k,\\
		{\bf 1}_{(N_j,2N_j]}(n) \mu(n) &\text{if}\ k+1\le j\le 2k,
	\end{cases}
}
and $N_1,\dots,N_{2k}$ being numbers that belong to $[1/4,4x]$ and satisfy the inequalities 
\[
N_{k+1},\dots,N_{2k}\le 4x^{1/{k_0}}
\quad\text{and}\quad
N_1\cdots N_{2k} \asymp x.
\] 
So it suffices to show that 
\eq{mainthm2-alt1}{
\int_x^{4x} \sum_{D<d\le 2D} \ssum{\chi\mod d} S(y,\eta y;f\chi)  \dee y 	
		\ll \frac{D\eta x^2}{\CL^{A+2k_0+3}} .
}
In order to detect the condition $y<n\le y+\eta y$ in $S(y,\eta y;f\chi)$, we use Perron's formula: if we set
\[
F_j(s,\chi) = \sum_{n=1}^\infty\frac{f_j(n) \chi(n) }{n^s} \quad(1\le j\le 2k)
\] 
and
\[
F(s,\chi) = F_1(s,\chi)\cdots F_{2k}(s,\chi) = \sum_{n=1}^\infty \frac{f(n)\chi(n)}{n^s},
\]
and we fix some $T_0\in(x/2,x]$, then using the lemma in \cite[p. 105]{Dav;2000} we find that
\eq{e70}{
\sum_{n\le z}f(n)\chi(n)
	= \frac1{2\pi i}  \int\limits_{\sub{\Re(s) = 1/2\\|\Im(s)|\le T_0}}  F(s,\chi) \frac{z^s}{s}  \dee s  
		+ O(x^{\epsilon/10})  \qquad(1\le z\le 5x) .
}
Applying this estimate for $z=y$ and $z=y+\eta y$ with $y\in[x,4x]$, we find that
\al{
	\int_x^{4x}  \sum_{d\le D}\ \ssum{\chi\mod d}   S(y,\eta;f\chi)   \dee y  
		&=   \frac1{2\pi}   \sum_{d\le D}\ \ssum{\chi\mod d}
				\int_x^{4x} \abs{ \int\limits_{\sub{\Re(s)=1/2 \\ |\Im(s)| \le T_0 }}  F(s,\chi)
					\frac{(1+\eta)^s-1}{s}\cdot y^s  \dee s } \nn
		&\quad			+ O\left(D^2x^{1+\epsilon/10}\right).   \label{e80}
}
Dividing the range of integration into $O(\CL)$ subsets of the form $\{s=1/2+it:T-1\le |t| \le 2T-1\}$, $T=2^m\ge1$, and choosing $T_0$ as the unique number of the form $2^m-1$ belonging to $(x/2,x]$, we find that \eqref{mainthm2-alt1} is reduced to showing that
\eq{mainthm2-alt2}{
 \sum_{d\le D}\ \ssum{\chi\mod d}
				\int_x^{4x} \abs{\ \int\limits_{\sub{\Re(s)=1/2 \\ T\le|\Im(s)|+1\le2T }}  F(s,\chi)
					\frac{(1+\eta)^s-1}{s}\cdot y^s  \dee s } 
		\ll \frac{D\eta x^2}{\CL^{A+2k_0+4}} 
}
for all $T\in[1,x/2]$. 

We continue by dividing the range of integration according to the size of the Dirichlet polynomials $F_j(s,\chi)$. To this end, we fix some numbers $U_1,\dots,U_{2k}$ with $1\le U_1\ll\sqrt{N_1}\log N_1$ and $1\le U_j\ll \sqrt{N_j}$ for $j\in\{2,3,\dots,2k\}$, and we set
\[
\CP(\chi,T,\bs U) = \{t\in\R: T\le|t|+1\le2T,\ U_j\le |F_j(1/2+t,\chi)|+1\le 2U_j\ (1\le j\le 2k)\} .
\]
Then relation \eqref{mainthm2-alt2} is reduced to showing that
\eq{mainthm2-alt3}{
 \sum_{D<d\le 2D}\ \ssum{\chi\mod d}
				\int_x^{4x} \abs{\,\int\limits_{\sub{s=1/2+it \\ t\in \CP(\chi,T,\bs U)} }  F(s,\chi)
					\frac{(1+\eta)^s-1}{s}\cdot y^s  \dee s } 
		\ll \frac{D\eta x^2}{\CL^{A+4k_0+4}} ,
}
for all $U_1,\dots,U_{2k}$ as above. From now on we fix such a choice of $U_1,\dots,U_{2k}$, and we set 
\[
U=U_1\cdots U_{2k} .
\]
Observe that $|F(1/2+it,\chi)|\asymp U$ for $t\in \CP(\chi,T,\bs U)$.  We fix two large enough constants $B$ and $C$ to be chosen later and we claim that we may assume that
\eq{U}{
U\le \min\{\CL^B\sqrt{x}/D,\sqrt{x}/\CL^C \} .
}
First, we show that we may assume that $U\le \CL^B\sqrt{x}/D$. Indeed, if $U > \CL^B\sqrt{x}/D$, then
\[
\int\limits_{\sub{s=1/2+it \\ t\in \CP(\chi,T,\bs U)} } F(s,\chi) \frac{(1+\eta)^s-1}{s}\cdot y^s\dee s 
	\ll \frac{\min\{\eta,1/T\}\sqrt{x}}{\sqrt{x}\CL^B/D}\int_{-2T}^{2T} |F(1/2+it, \chi)|^2\dee t.
\]
Consequently, Lemma\ \ref{large sieve} implies that
\als{
	\sum_{D<d\le 2D}&\ \ssum{\chi\mod d}
		\int_x^{4x}   \abs{ \,\int\limits_{\sub{s=1/2+it \\ t\in \CP(\chi,T,\bs U)}} 
			F(s,\chi)\frac{(1+\eta)^s-1}{s}\cdot y^s  \dee s }  \\
		&\ll   \frac{Dx\min\{\eta,1/T\}}{\CL^B}\sum_{d\le D}\ \sideset{}{^*}
			\sum_{\chi\mod d}\int_{-2T}^{2T}   |F(1/2+it,\chi)|^2\dee t  \\
		&\ll \frac{Dx\min\{\eta,1/T \}}{\CL^B}(D^2T+x)  \CL^{4k^2+2} 
			\ll  \frac{Dx}{\CL^{B-4k^2-2}}(D^2+\eta x)
			\ll  \frac{D\eta x^2}{\CL^{B-4k^2-2}},
}
which is admissible provided that $B\ge A+8k_0^2+6$, a condition we assume from now on. So we only need to consider the case when $U\le \CL^B\sqrt{x}/D$. 

Finally, we prove that we may restrict our attention to the case when $U\le \sqrt{x}/\CL^C$. If $D>\CL^{B+C}$, this is implied by our assumption that $U\le \CL^B\sqrt{x}/D$. So we assume that $D\le \CL^{B+C}$. We fix a small positive constant $\delta$ to be chosen later and we set
\[
\CJ=\{1\le j\le 2k: N_j>x^{\delta^2}\} 
\quad\text{and}\quad J=|\CJ|.
\]
As long as $\delta^2<1/(2k)$ (which we shall assume), we have that $J\ge1$. If $j\in\CJ$ and $\chi$ is a primitive character modulo some $d\in(D,2D]$, then we have that $|F_j(1/2+it,\chi)|\ll \sqrt{N_j}/\CL^{C+1}$ for all $t\in[-x,x]$. Showing this inequality is routine: we start by using Perron's formula to express $F_j(1/2+it,\chi)$ in terms of $L'(s,\chi), L(s,\chi)$ or $(1/L)(s,\chi)$, according to whether $j=1$, $1<j\le k$ or $k<j\le2k$, respectively. Then we shift the contour to the left and bound $L(s,\chi),L'(s,\chi)$ or $(1/L)(s,\chi)$ in the neighbourhood of the line $\Re(s)=1$ using exponential sum estimates due to Vinogradov. (Note that when $\Im(s)$ is small and $k<j\le 2k$, we need to make use of our assumption that $D\le\CL^{B+C}$ and to apply Siegel's theorem.) Since $|F_j(1/2+it,\chi)| \ll  \sqrt{N_j}/\CL^{C+1}$ for $t\in[-x,x]$, we may assume that $U_j\ll \sqrt{N_j}/\CL^{C+1}$ for all $j\in\CJ$, which in turn implies that $U=\prod_{j=1}^{2k}\ll \CL^{-C-1}\prod_{j=1}^{2k} \sqrt{N_j}\asymp \sqrt{x}/\CL^{C+1}$. So if $x$ is large enough, then $U\le \sqrt{x}/\CL^C$, as claimed.

Hence from now on we assume that $U$ satisfies relation \eqref{U}. Fix for the moment a character $\chi$ and consider the integral
\[
I=I(\chi,T,\bs U) = \int_{x}^{3x} \abs{\ \int\limits_{\sub{s=1/2+it \\ t\in \CP(\chi,T,\bs U)} } 
		F(s,\chi) \frac{(1+\eta)^s-1}{s} \cdot y^s \dee s } \dee y.
\]
Employing the Cauchy-Schwarz inequality, we find that
\als{
I^2
	&\le 2x\int_{x}^{3x} \abs{\ \int\limits_{\sub{s=1/2+it \\ t\in \CP(\chi,T,\bs U)} } 
		F(s,\chi) \frac{(1+\eta)^s-1}{s} \cdot y^s \dee s }^2 \dee y \\
	&= 2x\int_{x}^{3x}  \int\limits_{\sub{s_1=1/2+it_1 \\ t_1\in \CP(\chi,T,\bs U)} } 
		\int\limits_{\sub{s_2=1/2+it \\ t_2\in \CP(\chi,T,\bs U)} } 
		F(s_1,\chi)\overline{F(s_2,\chi)} \frac{(1+\eta)^{s_1}-1}{s_1} 
		\frac{(1+\eta)^{\overline{s_2}}-1}{\overline{s_2}} y^{s_1+\overline{s_2}}\dee s_1\dee s_2\dee y.
}
We first integrate over $y$ and then observe that 
\[
\frac{(1+\eta)^{1/2+it}-1}{1/2+it}\ll  \min\left\{\eta,\frac{1}{1+|t|} \right\}
\]
and that $|F(s_1,\chi)\overline{F(s_2,\chi)}|\le |F(s_1,\chi)|^2 +|F(s_2,\chi)|^2$. So we deduce that
\als{
	I^2 \ll x^3 \min\left\{\eta,\frac{1}{T} \right\}^2 \int_{\CP(\chi,T,\bs U)}   |F(1/2+it_1,\chi)|^2
		\int_{-2T}^{2T} \frac1{1+|t_1-t_2|}\dee t_2\dee t_1.
}
The inner integral is $\ll\log(2T)\ll\CL$, which reduces \eqref{mainthm2-alt3} to showing that
\[
 \sum_{D<d\le 2D}\ \ssum{\chi\mod d} \left( \int_{\CP(\chi,T,\bs U)}   |F(1/2+it,\chi)|^2 \dee t\right)^{1/2} 
		\ll  \frac{D\sqrt{x}\max\{1,\eta T\}}{\CL^{A+4k_0+9/2}} .
\]
By an application of the Cauchy-Schwarz inequality, we find that it is enough to show that
\eq{mainthm2-alt4}{
S(T,\bs U):= \sum_{D<d\le 2D}\ \ssum{\chi\mod d} \int_{\CP(\chi,T,\bs U)}   |F(1/2+it,\chi)|^2 \dee t
		\ll  \frac{\max\{1,\eta T\}^2  x}{\CL^{2A+8k_0+9}},
}
for all $T\in[1,x]$ and all $U_1,\dots,U_{2k}$ with $U\le \min\{\CL^B\sqrt{x}/D,\sqrt{x}/\CL^{C}\}$. We note that this relation follows immediately by Lemma \ref{large sieve} if $T>\CL^{A+6k_0^2+6}/\eta$. So from now on we will be assuming that $T\le \CL^{A+6k_0^2+6}/\eta$. Moreover, we set 
\eq{H-def}{
H=D^2T \le \CL^{A+6k_0^2+6}D^2/\eta = x^{1-\beta} \CL^{A+6k_0^2+6} ,
}
by the definition of $\beta$ by relation \eqref{beta-def}.

In order to show\ \eqref{mainthm2-alt4}, we will take advantage of the special product structure of $F(s,\chi)$, stemming from the fact that $f$ is a convolution. First, we consider the case when there is some $j\in\{1,\dots,k\}$ with $N_j>(DT)^{1/2+\delta}\CL^B$. For simplicity, let us assume that $j\ge2$, the argument when $j=1$ being similar. If $\delta_1$ is chosen so that $(1+\delta_1)/(2+\delta_1)=1/2+\delta$ and $M=\max\{1,(4DT/N_j)^{1+\delta_1}\}$, then Lemma \ref{charsums} implies that
\[
F_j(1/2+it,\chi) = \sum_{n=1}^\infty \frac{g(n/N_j)\chi(n)}{n^{1/2+it}} 
	\ll   \int_{-\infty}^{\infty} 
				\left|\sum_{n\le M } \frac{ \chi(n)} {n^{1/2+i(u+t)}}\right|
				\frac{\dee u}{1+u^2}  .
\]
Together with the Cauchy-Schwarz inequality and Lemma \ref{large sieve}, this implies that
\als{
S(T,\bs U)
	&\ll \int_{-\infty}^{\infty}  \sum_{D<d\le 2D}\ssum{\chi\mod d} 
		\int_{-2T}^{2T} 
				\left|\sum_{n\le M } \frac{\chi(n)}{n^{1/2+i(u+t)}}\right| ^2
					\prod_{\substack{1\le \ell\le 2k \\ \ell\neq j}} |F_\ell(1/2+it,\chi)|^2 \frac{ \dee t\dee u}{1+u^2} \\
	&\ll  \left(\frac{x+x(DT/N_j)^{1+\delta_1}}{N_j} + H \right) \CL^{4k^2+2} 
		\le \left(\frac{2x}{\CL^{B}} + H \right) \CL^{4k^2+2} .
}
Since $H\le x^{1-\beta+o(1)}$ by \eqref{H-def}, we conclude that $S(T,\bs U) \ll x/\CL^{B-4k^2-2}$, so \eqref{mainthm2-alt4} does hold, provided that $B\ge 2A+12k_0^2+11$.

The above discussion allows to assume that if $j\in\{1,\dots,k\}$, then 
\[
N_j\le (DT)^{1/2+\delta}\CL^B \le x^{(1/2+\delta)(1-(\theta+\beta)/2)+o(1)}\le x^{1/2-\delta},
\]  
provided that $\delta$ is small enough. We suppose that $k_0\ge 3$, so that $N_j\le x^{1/3}$ for all $j\in\{k+1,\dots,2k\}$. Therefore, we see that $J=|\CJ|\ge 3$. 

Next, we reduce \eqref{mainthm2-alt4} to a problem about large values of Dirichlet polynomials. We set
\[
\CZ(\chi,T,\bs U) = \left\{n\in\Z:  [n,n+1]\cap \CP(\chi,T,\bs U) \neq \emptyset \right\} =: \{n_1,\dots,n_r\},
\]
say, with $n_1<n_2<\cdots<n_r$. For each $j\in\{1,\dots,r\}$, we select $t_j\in[n_j,n_{j+1}]\cap \CP(\chi,T,\bs U)$, and we set 
$\CR_m(\chi,T,\bs U)=\{t_j:1\le j\le r,\ j\equiv m\mod{2}\}$ for $m\in\{0,1\}$, then the sets
\[
\CR_m(T,\bs U):= \bigcup_{D<d\le 2D} \bigcup_{\substack{\chi\mod d \\ \chi\ \text{primitive} }} 
	\{(\chi,t): t\in \CR_m(\chi,T,\bs U)\} \qquad(m\in\{0,1\})
\] 
are well-spaced according to the definition before Lemma \ref{large values}. Finally, we note that
\[
S(T,\bs U) \ll  U^2 (|\CR_0(T,\bs U)|+|\CR_1(T,\bs U)|),
\]
which reduces \eqref{mainthm2-alt4} to showing that
\eq{mainthm2-alt5}{
|\CR_m(T,\bs U)|  \ll \frac{x}{\CL^{2A+8k_0+9} U^2}  \quad(m\in\{0,1\}) .
}
We fix $m\in\{0,1\}$ and proceed to the proof of \eqref{mainthm2-alt5}. We distinguish several cases. 

\medskip

\underline{\textsc{Case 1}}. Assume that there is a $j\in \CJ$ such that $U_j>\sqrt{N_j}/\CL^B$.

\medskip

\noindent  We fix a large enough positive integer $r$ so that $U_j^{2r}\ge H$ and we apply Lemma \ref{large values} with $F_j(s,\chi)^r$ in place of $A(s,\chi)$ to deduce that
\[
|\CR_m(T,\bs U)| \ll \frac{N_j^r+H}{U_j^{2r}} \CL^{9r^2+18}   \ll \CL^{4rB+9r^2+18} 
	 \ll \frac{x}{U^2\CL^{2A+8k_0+9}}
\]
by \eqref{U}, provided that $C$ is large enough. So \eqref{mainthm2-alt5} does hold in this case.

\medskip

\underline{\textsc{Case 2}}. Suppose that $U_j\le \sqrt{N_j}/\CL^B$ for all $j\in \CJ$ and that there is some $j\in \CJ$ such that $U_j\le x^{\beta/2}/\CL^{B}$. 

\medskip

\noindent We apply Lemma \ref{large sieve} with $\prod_{\ell\neq j} F_\ell(s,\chi)$ in place of $A(s,\chi)$ and use relation \eqref{H-def} to deduce that
\als{
|\CR_m(T,\bs U)|\ll \frac{x/N_j+H}{(U/U_j)^2}  \CL^{12k^2+24} 
		&= \frac{x}{U^2} \left( \frac{U_j^2}{N_j} + \frac{U_j^2H}{x} \right) \CL^{12k^2+24}  \\
		&\ll \frac{x}{U^2} \left( \frac{1}{\CL^{2B}}  + \frac{(x^\beta/\CL^{2B}) x^{1-\beta}\CL^{A+6k_0^2+6}}{x} \right) 
			\CL^{12k^2+24} ,
}
which shows \eqref{mainthm2-alt5} by taking $B\ge 3A/2+13k_0^2+20$. 

\medskip

\underline{\textsc{Case 3}}. Assume that $U_j\in[x^{\beta/2}/\CL^B,\sqrt{N_j}/\CL^{2B}]$ for all $j\in \CJ$ and that $\beta\ge1/(2J)+\epsilon/2$, where $J=|\CJ|$. 

\medskip

\noindent  In this case we argue by contradiction: we begin by assuming that $|\CR_m(T,\bs U)|\ge x/(U^2\CL^{2A+8k_0+9})$. 
For every $j\in \CJ$, we apply Lemma \ref{large values} with $\prod_{\ell\neq j} F_\ell(s,\chi)$ in place of $A(s,\chi)$ to deduce that
\als{
\frac{x}{U^2\CL^{2A+8k_0+9}} 
 	\le |\CR_m(T,\bs U)|
		&\ll \left(\frac{x/N_j}{(U/U_j)^2} + \frac{Hx/N_j}{(U/U_j)^6}\right)\CL^{12k^2+24}  \\
	&\le \frac{x}{U^2\CL^{2B-12k^2-24}} + \frac{x^{2-\beta}U_j^6}{U^6N_j} \CL^{A+18k_0^2+30} ,
}
by \eqref{H-def} and our assumption on $U_j$. Since $B\ge 3A/2+13k_0^2+20$, this implies that
\[
\frac{x}{U^2\CL^{2A+8k_0+9}}  \ll	 \frac{x^{2-\beta}U_j^6}{U^6N_j} \CL^{A+18k_0^2+30} 
\qquad\implies\qquad 
\frac{U^4}{U_j^6} \ll	 \frac{x^{1-\beta}}{N_j} \CL^{3A+26k_0^2+39}  .
\]
We multiply the last inequality for all $j\in \CJ$ to find that
\[
U^{4J-6} \ll  x^{J(1-\beta)-1+k\delta^2} \CL^{J(3A+26k_0^2+39)} .
\]
We have that $U\ge\prod_{j\in \CJ}U_j\ge (x^{\beta/2}/\CL^{B})^J$. Therefore
\[
(2J-3)J\beta \le J-1-\beta J + (k+1)\delta^2
\qquad\implies\qquad 
2J\beta\le 1 + (k+1)\delta^2,
\]
which contradicts our assumption that $\beta\ge1/(2J)+\epsilon/2$ if $\delta$ is small enough in terms of $\epsilon$ and $k$. 
Therefore relation \eqref{mainthm2-alt5} holds in this case too.

\medskip

We note that since $J\ge3$, the above discussion shows Theorem \ref{mainthm1} when $\theta\le 1/2$. Indeed, in this case $\beta\ge \alpha+\epsilon/2=1/6+\epsilon/2\ge1/(2J)+\epsilon/2$, so we see that Cases 1-3 above are exhaustive. Hence from on we may assume that $\theta>1/2$. For such a $\theta$, we must have that $\alpha+\theta\ge 2/3$ by the definition of $\alpha$. Since $\beta\ge\alpha+\epsilon/2$, we must have that $\beta+\theta\ge2/3+\epsilon/2$. So the last case we consider is:

\medskip

\underline{\textsc{Case 4}}. Assume that $U_j\in[x^{\beta/2}/\CL^B,\sqrt{N_j}/\CL^{2B}]$ for all $j\in \CJ$, that $\beta<1/(2J)+\epsilon/2$, and that $\beta+\theta\ge2/3+\epsilon/2$. 

\medskip

\noindent Recall that $N_j\le (DT)^{1/2+\delta}=x^{(1/2+\delta)(1-(\theta+\beta)/2)+o(1)}$ for all $j\in\{1,\dots,k\}$. If $\delta$ is small enough in terms of $\epsilon$, then our assumption that $\beta+\theta\ge2/3+\epsilon/2$ implies that $N_j\le x^{1/3-\epsilon/10}$ when $1\le j\le k$. If we take $k_0=4$, so that $N_j\le x^{1/4}$ when $k+1\le j\le 2k$, then we must have that $J\ge 4$. In particular, $\beta<1/8+\epsilon/2$, whence we deduce that $\alpha<1/8$. The definition of $\alpha$ then implies that $\theta>5/8$, so that $\alpha=(1-\theta)/3$. Recall that $U\le \CL^B\sqrt{x}/D$. Since $U^2\ge (x^{\beta}/\CL^{2B})^J \ge x^{4\beta}\CL^{-8B}$, we conclude that
\[
x^{4\beta}\le \frac{\CL^{10B} x}{D^2} = \CL^{10B+A+1} x^{1-\theta+\beta}.
\]
Therefore $\beta\le(1-\theta)/3-o(1)$. However, this contradicts the fact that $\beta\ge \alpha+\epsilon/2\ge (1-\theta)/3+\epsilon/2$. This shows that this last case cannot actually occur, thus completing the proof of relation \eqref{mainthm2-alt5} and hence of Theorem \ref{mainthm2}.

%%%%%%%%%%%%%%%%%%%%%%%%%%%%%%%%%%%%%%%%%%%%%%%%%%%%%%%%%%%%%%%%%%%%%%%%%%%%%%%%%%%%%%%%%%%%%%%%%%%%%%%%%%%%%%%%%%%%%%%%%%%%%%%%%%%%%%%%%%%%%%%%%%%%%%%%%%%%%%%%%%%%%%%%%%%%%%%%%%%%%%%

\begin{rem}
The above method cannot be improved without some additional input. This can be seen by taking $N_j=x^{1/J}$ when $1\le j\le J$ and $N_j=1$ when $J<j\le 2k_0$, so that $\CJ=\{1,\dots,J\}$. Also, we assume that $U_j=N_j^{1/4}$, so that when we apply Lemma \ref{large values}, the two different expressions on the right hand side balance. Then the only estimate we can extract from Lemma \ref{large values} is 
\[
|\CR_m(T,\bs U)|\ll \left(\frac{x^{r/J}}{U^{2r/J}} + \frac{H}{U^{2r/J}}\right)\CL^{O(1)}  \quad(r\in\N) .
\]
The largest $r$ we can take while still having that $x^{r/J}/U^{2r/J}$ is smaller than $x/U^2$ is $r=J-1$. For this choice, assuming also that $T=1/\eta$, we find that $H/U^{2r/J} = U^{2/J}H/U^2 = U^{2/J}x^{1-\beta}/U^2$. So in order to make this expression smaller than $x/U^2$, we need to assume that $U^2\le x^{J\beta}$. Since $U_j=N_j^{1/4}$, we have that $U^2\asymp x^{1/2}$, so we must have that $\beta\ge1/(2J)$. As it is clear from the proof, what allows us improve upon this estimate when $\theta>5/8$ is the fact that we also know that $U\le\CL^B\sqrt{x}/D$.
\end{rem}

%%%%%%%%%%%%%%%%%%%%%%%%%%%%%%%%%%%%%%%%%%%%%%%%%%%%%%%%%%%%%%%%%%%%%%%%%%%%%%%%%%%%%%%%%%%%%%%%%%%%%%%%%%%%%%%%%%%%%%%%%%%%%%%%%%%%%%%%%%%%%%%%%%%%%%%%%%%%%%%%%%%%%%%%%%%%%%%%%%%%%%%%%%%%%%%%%%%%%%%%%%%%%%%%%%%%%%%%%%%%%%%%%%%%%%%%%%%%%%%%
%%%%%%%%%%%%%%%%%%%%%%%%%%%%%%%%%%%%%%%%%%%%%%%%%%%%%%%%%%%%%%%%%%%%%%%%%%%%%%%%%%%%%%%%%%%%%%%%%%%%%%%%%%%%%%%%%%%%%%%%%%%%%%%%%%%%%%%%%%%%%%%%%%%%%%%%%%%%%%%%%%%%%%%%%%%%%%%%%%%%%%%%%%%%%%%%%%%%%%%%%%%%%%%%%%%%%%%%%%%%%%%%%%%%%%%%%%%%%%%%

\section{Proof of Theorem \ref{mainthm3}}\label{mainthm3-proof}

Fix $\epsilon,A$ and $x,h,Q$ as in the statement of Theorem \ref{mainthm3}. As in the previous section, all implied constants might depend on $\epsilon$ and $A$. 

When $h>x/\CL^{A+1}$, then Theorem \ref{mainthm3} follows by the Bombieri-Vinogradov theorem. So assume that $h\le x/\CL^{A+1}$. Note that we also have that $h\ge x^{1/15+\epsilon}$. Therefore, using Buchstab's identity as in \cite{Li;1997}, we can construct a function $\rho:\N\to\R$ that satisfies the following properties: 
\begin{itemize}
\item $\rho\le  {\bf 1}_{\mathbb P}$, where ${\bf 1}_{\mathbb P}$ is the indicator function of the set of primes;
\item $\rho$ is supported on integers free of prime factors $<x^{1/100}$;
\item $\rho(n)=O(1)$ for all $n\le 2x$; 
\item there is a positive constant $c_1$ such that
\eq{rho}{
\int_x^{2x}  \abs{ \sum_{ y<n \le y+h  } \rho(n) - \frac{c_1 h}{\log y} } \dee y
		\ll \frac{hx}{\CL^{A+2}}  
}
\end{itemize}
We claim that $\rho$ also satisfies the inequality
\eq{mainthm3-alt1}{
\int_x^{2x}\sum_{q\le Q}
	\max_{(a,q)=1}  \abs{ \sum_{\substack{ y<n \le y+h \\ n\equiv a\mod q} } \rho(n) - \frac{c_1 h}{\phi(q)\log y} } \dee y
		\ll_{A,\epsilon} \frac{hx}{\CL^A}  ,
}
for all $A>0$. Before proving this relation, we will demonstrate that it implies Theorem \ref{mainthm3}. Without loss of generality, we may assume that $h\in\Z$. Consider integers $x_1,x_2,Q_1,Q_2$ with $x\le x_1<x_2\le 2x$ and $Q/2\le Q_1<Q_2\le Q$. Then relation \eqref{mainthm3-alt1} immediately implies that
\[
\sum_{x_1\le m < x_2}\sum_{Q_1<q\le Q_2}
	\max_{(a,q)=1}  \abs{ \sum_{\substack{ m<n \le m+h \\ n\equiv a\mod q} } \rho(n) - \frac{c_1 h}{\phi(q)\log m} } 
		\ll  \frac{hx}{\CL^A}  .
\]
Therefore, the number of pairs $(q,m)\in\N^2\cap((Q_1,Q_2]\times[x_1,x_2))$ such that
\[
\abs{ \sum_{\substack{ m<n \le m+h \\ n\equiv a\mod q} } \rho(n) - \frac{c_0h}{\phi(q)\log m} } 
	\ge \frac{c_1h}{2\phi(q)\log y} 
\]
is $O(Qx\CL^{-A+1})$. Since $\rho\le {\bf 1}_{\mathbb P}$ and $\log n\sim \log m$ for $n\in[m,m+h]\subset[x,3x]$, we deduce that
\als{
\#\left\{  \begin{array}{c} (q,m)\in \N^2 \\ Q_1<q\le Q_2 \\ x_1\le m< x_2 \end{array} : 
\sum_{\substack{m<p\le m+h \\ p\equiv a\mod q}} \log p \ge \frac{c_1h}{3\phi(q)}\  \text{when}\ (a,q)=1\right\} 
	&= (Q_2-Q_1)(x_2-x_1) \\
	&\quad + O\left(\frac{Qx}{\CL^{A-1}}\right)   .
}
Theorem \ref{mainthm3} then follows with $c=c_1/3$ by the above estimate and a dyadic decomposition argument, after replacing $A$ with $A+1$.

So we have reduced our task to showing relation \eqref{mainthm3-alt1}. In view of relation \eqref{rho}, we further note that we may instead show that
\eq{mainthm3-alt2}{
\int_x^{2x}\sum_{q\le Q}
	\max_{(a,q)=1}  \abs{ \sum_{\substack{ y<n \le y+h \\ n\equiv a\mod q} } \rho(n) - \frac{1}{\phi(q)}
		\sum_{ y<n \le y+h  } \rho(n) } \dee y
		\ll   \frac{hx}{\CL^A}  .
}
Hence, arguing as in Section \ref{mainthm1-proof} and using the zero-density estimate \eqref{zero-density} with $c=12/5+\epsilon$, we find that it suffices to prove that
\eq{mainthm3-alt3}{
\int_x^{4x}\sum_{\substack{D<d\le 2D \\ d\notin\CE}}\ssum{\chi\mod d}
	S(y,\eta y;\rho\chi) \dee y
		\ll    \frac{D\eta x^2}{\CL^{A+4}} 
}
with $\eta= h/(x\CL^{A+1})$ and $\mathcal{E}$ being the set of moduli $q\le Q$ which are multiples of integers $d$ modulo which there exists a primitive Dirichlet character $\chi$ such that $N(\sigma_0,x,\chi)\ge1$, where $\sigma_0= 1- c_0(\log\CL)/\CL$ for some constant $c_0$ that can be taken arbitrarily large. There is only one thing that does not transfer immediately: relation \eqref{(n,q)>1}. Indeed, we need to be more careful when passing from all Dirichlet characters to primitive ones. Note that in order to perform this passage we need to show that
\[
S:= \sum_{q\le Q} \frac{1}{\phi(q)} \sum_{d|q} \ssum{\chi \mod d} \abs{ \sum_{\substack{y<n\le y+\eta y \\ (n,q)>1}} \chi(n)\rho(n) } \ll \frac{\eta y}{\CL^A},
\]
for all $y\in[x,3x]$. We write $d=qm$ and note that the the presence of the character $\chi$ in the inner sum implies automatically that the sum is supported on integers $n$ which are coprime to $d$. So the condition $(n,q)>1$ may be replaced by the condition $(n,m)>1$. Splitting also the range of $d$ into $O(\CL)$ intervals of the form $(D,2D]$, we deduce that
\[
S \ll	\CL^2 \max_{1/2\le D\le Q} \frac{1}{D} \sum_{m\le Q/D}\frac{1}{m} \sum_{D<d\le 2D} 
	\ssum{\chi \mod d} \abs{ \sum_{\substack{y<n\le y+\eta y \\ (n,m)>1}} \chi(n)\rho(n) } .
\]
We fix $D\in[1/2,Q]$ and $m\le Q/D$ and note that the Cauchy-Schwarz inequality and the Large Sieve \cite[p. 160, Theorem 4]{Dav;2000} imply that
\als{
\left(\frac{1}{D}  \sum_{D<d\le 2D} 
	\ssum{\chi \mod d} \abs{ \sum_{\substack{y<n\le y+\eta y \\ (n,m)>1}} \chi(n)\rho(n) } \right)^2
	&\ll \sum_{D<d\le 2D} 
		\ssum{\chi \mod d} \abs{ \sum_{\substack{y<n\le y+\eta y \\ (n,m)>1}} \chi(n)\rho(n) }^2 \\
	&\ll  \eta y\sum_{\substack{y<n\le y+\eta y \\ (n,m)>1}} |\rho(n)|^2 ,
}
since $D\le Q\le \sqrt{\eta y}$ by assumption. By the properties of $\rho$ mentioned above we find that
\als{
 \sum_{\substack{y<n\le y+\eta y \\ (n,m)>1}} |\rho(n)|^2 	
 	\ll  \sum_{\substack{y<n\le y+\eta y \\ p|n\ \Rightarrow\ p>x^{1/100} \\ (n,m)>1}} 1
	\le \sum_{\substack{g|m,\, g>1 \\ p|g\ \Rightarrow\ p>x^{1/100} }}
		\sum_{\substack{y<n\le y+\eta y \\ (n,m)=g}} 1 
	&\ll \sum_{\substack{g|m \\ g>x^{1/100}}}  \left(\frac{\eta y}{g} + 1\right) 
		\ll \frac{\tau(m)\eta y}{x^{1/100}}.
}
Consequently,
\[
S\ll \frac{\CL^2\eta y}{x^{1/200}} \sum_{m\le Q/D} \frac{\sqrt{\tau(m)}}{m}
	\ll \frac{\eta y}{\CL^A},
\]
as claimed. So we may indeed focus on proving relation \eqref{mainthm3-alt3}. 

Finally, arguing alone the lines of Section \ref{mainthm1-proof} and of Section \ref{mainthm2-proof}, we find that relation \eqref{mainthm3-alt3} can been reduced to showing that
\[
\sum_{\substack{D<d\le 2D \\ d\notin\CE}}\ssum{\chi\mod d} \int_{-2T}^{2T}
	\abs{\sum_{n\asymp x} \frac{\rho(n)\chi(n)}{n^{1/2+it}} }^2 \dee t \ll \frac{x}{\CL^{A_1}}   
			\quad(1\le T\le \eta^{-1}\CL^{A_1},\ D\le Q) ,
\]
for some constant $A_1$ that is sufficiently large in terms of $A$. The fact that $\rho$ does satisfy this bound is now a consequence of the methods in \cite{Li;1997}. Indeed, here the `length of the average' $D^2T$ (we are summing over about $D^2$ characters and integrating over an interval of length $T$) satisfies the inequality $D^2T\le x^{14/15+2\epsilon}$, which is precisely the inequality needed for $T$ in \cite{Li;1997}, which is the length of the average there. We can then employ Lemma \ref{large values} in a completely analogous way to the one Hal\'asz's method is used in \cite{Li;1997}. We also need an additional input: that, for all characters $\chi$ we are averaging over, and for all $N\ge x^\delta$, we have
\[
\sum_{N<p\le 2N} \frac{\chi(p)}{p^{1/2+it}} \ll \frac{\sqrt{N}}{\CL^{A_2}} ,
\]
where $A_2$ is a sufficiently large constant. This is a consequence of our assumption that $d>D\ge1$ (so $\chi$ is non-principal) and that $d\notin\CE$, provided that $c_0$ is large enough in terms of $A_2$. Thus Theorem \ref{mainthm3} follows. 

\begin{rem}
With a little more care, we may even assume that we work with primitive characters $\chi$ such that
\eq{improved prime estimate}{
\sum_{N<p\le 2N} \frac{\chi(p)}{p^{1/2+it}} \ll \frac{\sqrt{N}}{\exp\{\CL^{1/3-\epsilon}\}}.
}
Let $z=\exp\{\CL^{2/3+\epsilon/2}\}$ and let $\CE'$ be the set integers $d$ modulo which there exists a primitive Dirichlet character $\chi$ such that $N(1-c/\log z,x,\chi)\ge1$, where $c$ is a small enough constant. Page's theorem \cite[p. 95]{Dav;2000} and the Korobov-Vinogradov zero-free region for Dirichlet $L$-functions (see the notes of Chapter 9 in \cite{Mon;1994}) imply that $\CE'$ contains at most one element $\le z$. Then the argument leading to \eqref{exceptional} allows us to restrict our attention to $q\in[1,Q]$ which are not multiples of integers in $\CE'$. Moreover, if $\chi$ is a non-principal character modulo such a $q$, then relation \eqref{improved prime estimate} holds.
\end{rem}

%%%%%%%%%%%%%%%%%%%%%%%%%%%%%%%%%%%%%%%%%%%%%%%%%%%%%%%%%%%%%%%%%%%%%%%%%%%%%%%%%%%%%%%%%%%%%%%%%%%%%%%%%%%%%%%%%%%%%%%%%%%%%%%%%%%%%%%%%%%%%%%%%%%%%%%%%%%%%%%%%%%%%%%%%%%%%%%%%%%%%%%%%%%%%%%%%%%%%%%%%%%%%%%%%%%%%%%%%%%%%%%%%%%%%%%%%%%%%%%%
%%%%%%%%%%%%%%%%%%%%%%%%%%%%%%%%%%%%%%%%%%%%%%%%%%%%%%%%%%%%%%%%%%%%%%%%%%%%%%%%%%%%%%%%%%%%%%%%%%%%%%%%%%%%%%%%%%%%%%%%%%%%%%%%%%%%%%%%%%%%%%%%%%%%%%%%%%%%%%%%%%%%%%%%%%%%%%%%%%%%%%%%%%%%%%%%%%%%%%%%%%%%%%%%%%%%%%%%%%%%%%%%%%%%%%%%%%%%%%%%

\bibliographystyle{alpha}

\end{document}